\def \dep{\mathsf{d}}
\DeclareMathOperator*{\Oplus}{\ensuremath{\vcenter{\hbox{\scalebox{1.3}{$\oplus$}}}}}
\theoremstyle{plain}
\newtheorem{theorem}{Theorem}[section]
\newtheorem{c-theorem}{Construction theorem}[section]
\newtheorem{lemma}[theorem]{Lemma}
\newtheorem{proposition}[theorem]{Proposition}
\theoremstyle{definition}
\theoremstyle{remark}
\newlength{\struh}
\newlength{\textminustop}
\newcommand*{\child}[1]{\mathsf{Chi}(#1)}
\newcommand*{\childn}[2]{{\mathsf{Chi}}^{\langle#1\rangle}(#2)}
\newcommand*{\parentn}[2]{{\mathsf{par}}^{\langle#1\rangle}(#2)}
\newcommand*{\lambdab}{\boldsymbol\lambda}
\newcommand*{\parent}[1]{\mathsf{par}(#1)}
\newcommand*{\rootb}{{\mathsf{root}}}
\newcommand{\ncom}{\newcommand}
\ncom{\bq}{\begin{equation}}
\ncom{\eq}{\end{equation}}
\ncom{\beqn}{\begin{eqnarray*}}
\ncom{\eeqn}{\end{eqnarray*}}
\ncom{\beq}{\begin{eqnarray}}
\ncom{\eeq}{\end{eqnarray}}
\ncom{\nno}{\nonumber}
\ncom{\rar}{\rightarrow}
\ncom{\Rar}{\Rightarrow}
\ncom{\noin}{\noindent}
\ncom{\bc}{\begin{centre}}
\ncom{\ec}{\end{centre}}
\ncom{\sz}{\scriptsize}
\ncom{\rf}{\ref}
\ncom{\sgm}{\sigma}
\ncom{\Sgm}{\Sigma}
\ncom{\dt}{\delta}
\ncom{\Dt}{Delta}
\ncom{\lmd}{\lambda}
\ncom{\Lmd}{\Lambda}
\ncom{\eps}{\epsilon}
\ncom{\pcc}{\stackrel{P}{>}}
\ncom{\dist}{{\rm\,dist}}
\ncom{\im}{{\rm Im\,}}
\ncom{\sgn}{{\rm sgn\,}}
\ncom{\ba}{\begin{array}}
\ncom{\ea}{\end{array}}
\ncom{\eop}{\hfill{{\rule{2.5mm}{2.5mm}}}}
\ncom{\eof}{\hfill{{\rule{1.5mm}{1.5mm}}}}
\ncom{\hone}{\mbox{\hspace{1em}}}
\ncom{\htwo}{\mbox{\hspace{2em}}}
\ncom{\hthree}{\mbox{\hspace{3em}}}
\ncom{\hfour}{\mbox{\hspace{4em}}}
\ncom{\hsev}{\mbox{\hspace{7em}}}
\ncom{\vone}{\vskip 2ex}
\ncom{\vtwo}{\vskip 4ex}
\ncom{\vonee}{\vskip 1.5ex}
\ncom{\vthree}{\vskip 6ex}
\ncom{\vfour}{\vspace*{8ex}}
\ncom{\norm}{\|\;\;\|}
\ncom{\integ}[4]{\int_{#1}^{#2}\,{#3}\,d{#4}}
\ncom{\inp}[2]{\langle{#1},\,{#2} \rangle}
\ncom{\Inp}[2]{\Langle{#1},\,{#2} \Langle}
\ncom{\vspan}[1]{{{\rm\,span}\#1 \}}}
\ncom{\dm}[1]{\displaystyle {#1}}
\begin{document}

\title[Unitary equivalence of balanced weighted shifts]{Unitary equivalence of balanced weighted shifts on rooted directed trees}

\author[S. Mandal]{Shubhankar Mandal}
\author[S. Trivedi]{Shailesh Trivedi}

\address{Department of Mathematics, Indian Institute of Technology Bhilai, 491002, India}

\email{shubhankarm@iitbhilai.ac.in}
  
\address{Department of Mathematics, Birla Institute of Technology and Science, Pilani, Pilani Campus, Vidya Vihar, Pilani, Rajasthan 333031, India}

 \email{shailesh.trivedi@pilani.bits-pilani.ac.in}

%\dedicatory{Dedicated to the memory of Pradeep Kumar}

\thanks{The work of the first author is supported through SERB-CRG (CRG/2022/003058), whereas the work of the second author is supported partially through the SERB-SRG (SRG/2023/000641-G) and OPERA (FR/SCM/03-Nov-2022/MATH)}

\keywords{balanced, weighted shift, rooted directed tree, unitary equivalence}

\subjclass[2020]{Primary 47B37; Secondary 05C20}

\begin{abstract} 
We completely characterize non-periodic balanced weighted shifts $S_{\lambdab}$ on rooted directed trees under a very mild assumption that $S_{\lambdab}^{*n}S_{\lambdab}^n|_{\ker S_{\lambdab}^*}$ is invertible operator on $\ker S_{\lambdab}^*$ for all $n \in \mathbb N$. This generalizes the previously established unitary equivalences for Bergman and Dirichlet type shifts associated with locally finite rooted directed trees. We also give a counter example to justify that the criteria obtained for non-periodic balanced weighted shifts is not necessary for eventually periodic balanced weighted shifts.
\end{abstract}

\maketitle

%\tableofcontents

\section{Introduction}

There has been an extensive study on the weighted shifts on directed trees since their introduction in operator theory \cite{JJS}, see for instance, \cite{ACJS2019, ACJS2019-2, ACT2020, B-D-P, BDPP, BJJS2017, CPT, CPT2017, CT, DPP, G, MS} and references therein. This short note is another contribution towards this study. In \cite{BDPP}, the authors introduced the notion of balanced weighted shifts on the rooted directed trees (see \cite{CPT2017} for the multivariable analogue of this notion) to study the reflexivity of these shifts. We carry forward this study and classify the balanced weighted shifts on rooted directed trees with non-periodic moments (see the paragraph prior to the Theorem \ref{Thm1.2}). To achieve this, we first model a balanced weighted shift $S_{\lambdab}$ as an operator $\mathscr M_z$ of multiplication by the coordinate function on a Hilbert space of vector-valued formal power series under a mild assumption that $S_{\lambdab}^{*n}S_{\lambdab}^n|_{\ker S_{\lambdab}^*}$ is invertible operator on $\ker S_{\lambdab}^*$ for all $n \in \mathbb N$ (see Lemma \ref{power-ker-inva}). This establishes that $S_{\lambdab}$ (with this assumption) is unitarily equivalent to an operator-valued unilateral weighted shift with invertible operator weights. Then we apply the unitary equivalence criteria for operator-valued unilateral weighted shifts with invertible operator weights to obtain the main result of this paper. To state the main result, we recall below a few basic facts pertaining to weighted shifts on directed trees. We refer the reader to \cite{JJS} for the definition of a weighted shift on a directed tree and related notions such as children, parent, branching vertices etc.

Let $\mathbb N$ and $\mathbb C$ denote the set of all non-negative integers and the field of complex numbers, respectively. For a set $X$, the cardinality of $X$ is denoted by $\text{Card}(X)$. Let $\mathscr T = (V, \mathcal E)$ be a rooted directed tree and the root vertex of $\mathscr T$ be denoted by $\rootb$. It follows from \cite[Proposition 2.1.2]{JJS} that for each $v \in V$ there exists a unique non-negative integer $\dep_v$ such that $v \in \childn{\dep_v}{\rootb}$. This unique non-negative integer $\dep_v$ is known as the {\it depth of} $v$. For $n \in \mathbb N$, the {\it $n$-th generation $\mathscr G_n$ of $\mathscr T$} is defined as:
\beq\label{gen-eq}
\mathscr G_n := \{v \in V : \dep_v = n\}.
\eeq
Let $V_\prec$ denote the set of all branching vertices of $\mathscr T$. The {\it branching index $k_\mathscr T$ of} $\mathscr T$ is defined as:  
\[k_\mathscr{T}:=\begin{cases}
 1+\sup\{\dep_v : v\in V_{\prec}\} & \text{if $V_{\prec}$ is non-empty},\\
 0 & \text{if $V_{\prec}$ is empty}.
\end{cases}
\]
Suppose that $S_\lambda$ is a bounded weighted shift on a rooted directed tree $\mathscr T = (V, \mathcal E)$. Then it follows from \cite[Eq. (2.2)]{CT} (see also \cite[Proposition 3.5.1(ii)]{JJS}) that 
\beq\label{ker-adjoint}
\ker S_{\lambdab}^*=[e_{\mathsf{root}}] \oplus \bigoplus_{v \in 
V_\prec}\Big(\ell^2(\mathsf{Chi}(v)) \ominus [\lambdab^v]\Big),
\eeq
where
$\lambdab^v : \mathsf{Chi}(v) \rar \mathbb C$ is defined as 
$\lambdab^v(u)=\lambda_u,$ and $[f]$ denotes the span of $\{f\}$. Since $\mathscr G_m \cap \mathscr G_n = \emptyset$ for $m \ne n$, \eqref{ker-adjoint} can be rewritten as 
\beq\label{ker-adjoint-gen}
\ker S_{\lambdab}^*=[e_{\mathsf{root}}] \oplus \bigoplus_{n \in \mathbb N}\, \bigoplus_{v \in V_\prec \cap \mathscr G_n} \Big(\ell^2(\mathsf{Chi}(v)) \ominus [\lambdab^v]\Big).
\eeq

Let $\mathscr T = (V, \mathcal E)$ be a countably infinite, rooted, leafless directed tree and $S_{\lambdab}$ be a bounded weighted shift on $\mathscr T$ with positive weights. Then following \cite{BDPP}, we say that $S_{\lambdab}$ is {\it balanced} if $\|S_{\lambdab} e_u\| = \|S_{\lambdab} e_v\|$ for all $u, v \in V$ such that $\dep_u = \dep_v$. Further, $S_{\lambdab}$ is said to be {\it locally power balanced} if $\|S^n_{\lambdab} e_u\| = \|S^n_{\lambdab} e_v\|$ for all integer $n \geqslant 1$ and all $u, v \in V$ such that $\parent u = \parent v$. It follows from \cite[Lemma 6.3]{BDPP} and \cite[Proposition 3.1.7]{JJS} that $S_{\lambdab}$ is balanced if and only if it is locally power balanced. Interesting examples of balanced weighted shifts appeared in \cite{CPT} known as Bergman and Dirichlet type shifts associated with a locally finite rooted directed tree. These examples are described as follows.

Let $\mathscr T = (V, \mathcal E)$ be a locally finite, countably infinite, rooted, leafless directed tree. For a real number $q \geqslant 1$, consider the weight system $\lambdab_q = \big\{\lambda_{u,q} : u \in V \setminus \{\rootb\}\big\}$ given  by
\beqn
\lambda_{u,q} = \frac{1}{\sqrt{\text{Card}(\child v)}} \sqrt{\frac{\dep_v+q}{\dep_v+1}} \ \text{ for }\ u \in \child v,\ v \in V.
\eeqn
Then the weighted shift $S_{\lambdab_q}$ on $\mathscr T$ associated with the weight system $\lambdab_q$ is called as Dirichlet type shift on $\mathscr T$. Note that $S_{\lambdab_q}$ is bounded and left-invertible. Similarly, if we define  
\beq\label{bergman}
\lambda_{u,q} = \frac{1}{\sqrt{\text{Card}(\child v)}} \sqrt{\frac{\dep_v+1}{\dep_v+q}} \ \text{ for }\ u \in \child v,\ v \in V,
\eeq
then the weighted shift associated with the weight system \eqref{bergman} is called as Bergman type shift on $\mathscr T$. It is also bounded and left-invertible. Note that if $q=1$, then both the Bergman and Dirichlet type weighted shifts are isometry.

For future reference, we rephrase a part of \cite[Theorem 6.4]{BDPP} as follows:

\begin{theorem}\label{BDPP}\cite[Theorem 6.4]{BDPP}
Let $S_{\lambdab}$ be a bounded weighted shift on a countably infinite, rooted, leafless directed tree $\mathscr T = (V, \mathcal E)$ with positive weights. Then the sequence $\{S^n_{\lambdab}(\ker S^*_{\lambdab})\}_{n \in \mathbb N}$ of subspaces of $\ell^2(V)$ is mutually orthogonal if and only if $S_{\lambdab}$ is balanced.
\end{theorem}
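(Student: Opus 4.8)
The plan is to reduce everything to one explicit formula for $S^{*p}_{\lambdab}S^q_{\lambdab}$ on basis vectors. Writing $\lambda_{v\to w}$ for the product of the weights along the path from $v$ to a descendant $w$, one has $S^q_{\lambdab}e_v=\sum_{u:\,\parentn{q}{u}=v}\lambda_{v\to u}\,e_u$ and $S^{*p}_{\lambdab}e_u=\lambda_{\parentn{p}{u}\to u}\,e_{\parentn{p}{u}}$. Composing and grouping the descendants $u$ by their ancestor $w=\parentn{p}{u}$, I would first establish, for $q\Ge p$, the key identity
\[
S^{*p}_{\lambdab}S^q_{\lambdab}e_v\;=\;\sum_{w:\,\parentn{q-p}{w}=v}\lambda_{v\to w}\,\|S^p_{\lambdab}e_w\|^2\,e_w,
\]
since for fixed $w$ the inner sum over $u$ with $\parentn{p}{u}=w$ collapses to $\|S^p_{\lambdab}e_w\|^2$. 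The whole argument is read off from this one formula.

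For the forward implication, assume $S_{\lambdab}$ is balanced. An easy induction on $n$, using $\|S^n_{\lambdab}e_w\|^2=\sum_{u\in\child w}\lambda_u^2\,\|S^{n-1}_{\lambdab}e_u\|^2$ together with the defining equality $\|S_{\lambdab}e_w\|=\|S_{\lambdab}e_{w'}\|$ whenever $\dep_w=\dep_{w'}$, shows that $\|S^p_{\lambdab}e_w\|^2$ depends only on $\dep_w$. Feeding this into the key identity, for any $g$ supported on a single generation $\mathscr G_m$ the factor $\|S^p_{\lambdab}e_w\|^2$ is constant over the relevant $w$ and pulls out, giving $S^{*p}_{\lambdab}S^q_{\lambdab}g=c\,S^{q-p}_{\lambdab}g$ for a scalar $c$. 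Decomposing an arbitrary $g\in\ker S^*_{\lambdab}$ along the generations (legitimate by \eqref{ker-adjoint-gen}) and taking $q>p$, I get $\langle S^p_{\lambdab}f,S^q_{\lambdab}g\rangle=c\,\langle f,S^{q-p}_{\lambdab}g\rangle$ for $f\in\ker S^*_{\lambdab}$; since $q-p\Ge 1$ the vector $S^{q-p}_{\lambdab}g$ lies in $\ran S_{\lambdab}\subseteq(\ker S^*_{\lambdab})^{\perp}$, so the inner product vanishes and the subspaces are mutually orthogonal.

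For the converse, assume mutual orthogonality; I would prove the locally power balanced condition (equivalent to balanced by the equivalence recalled above), namely that $\|S^p_{\lambdab}e_w\|$ is constant on $\child u$ for every branching vertex $u$ and every $p\Ge 1$. Fix such a $u$, set $q=p+\dep_u+1$, and test orthogonality of $S^p_{\lambdab}(\ker S^*_{\lambdab})$ against $S^q_{\lambdab}(\ker S^*_{\lambdab})$ on the root vector: $\langle S^p_{\lambdab}f,S^q_{\lambdab}e_{\rootb}\rangle=0$ for all $f\in\ker S^*_{\lambdab}$ forces $h:=S^{*p}_{\lambdab}S^q_{\lambdab}e_{\rootb}\in(\ker S^*_{\lambdab})^{\perp}$. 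By the key identity $h=\sum_{w:\,\dep_w=\dep_u+1}\lambda_{\rootb\to w}\,\|S^p_{\lambdab}e_w\|^2\,e_w$; restricting the relation $h\perp\big(\ell^2(\child u)\ominus[\lambdab^u]\big)$ coming from \eqref{ker-adjoint} to $\ell^2(\child u)$ and using $\lambda_{\rootb\to w}=\lambda_{\rootb\to u}\,\lambda_w$ for $w\in\child u$ shows that $h|_{\child u}$ is a scalar multiple of $\lambdab^u$, which (as $\lambda_w>0$) is exactly the statement that $\|S^p_{\lambdab}e_w\|^2$ is constant over $w\in\child u$. This holds for every branching $u$ and every $p\Ge 1$, yielding locally power balanced, hence balanced.

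The main obstacle is the converse. The naive idea of choosing a test vector $g\in\ker S^*_{\lambdab}$ supported near $u$ breaks down for branching vertices that are neither the root nor children of a branching vertex, since by \eqref{ker-adjoint} the kernel need not charge such vertices. What makes the argument uniform is to always test against $e_{\rootb}$ and instead encode the depth of $u$ in the gap $q-p=\dep_u+1$; the key identity then lands $h$ on generation $\dep_u+1$, where the single orthogonality constraint attached to $u$ isolates precisely the desired siblings. The remaining work — the collapse of the double sum defining $S^{*p}_{\lambdab}S^q_{\lambdab}$ and the bookkeeping of the path weights $\lambda_{v\to w}$ — is routine.
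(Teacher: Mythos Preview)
The paper does not prove this statement; it is quoted verbatim from \cite[Theorem~6.4]{BDPP} and used as a black box throughout (see the sentence preceding the theorem and its later invocations in Lemma~\ref{power-ker-inva} and Proposition~\ref{model-prop}). There is therefore no in-paper proof to compare against.

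Your argument is correct as it stands. The key identity
\[
S^{*p}_{\lambdab}S^q_{\lambdab}e_v=\sum_{w\in\childn{q-p}{v}}\lambda_{v\to w}\,\|S^p_{\lambdab}e_w\|^2\,e_w\qquad(q\Ge p)
\]
is exactly right, and both implications follow from it cleanly. A couple of minor remarks: in the forward direction you should state explicitly that the case $p>q$ follows by conjugate symmetry of the inner product, since you only argue $q>p$; and in the converse you silently use that $\lambda_{\rootb\to u}>0$ (all weights are positive) to cancel. The step you flag as the ``main obstacle'' --- testing against $e_{\rootb}$ with the gap $q-p=\dep_u+1$ rather than trying to find a kernel vector near $u$ --- is a nice uniform trick and is what makes the converse work for branching vertices at arbitrary depth. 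Note finally that you are relying on the equivalence of \emph{balanced} and \emph{locally power balanced} recalled in the paper (attributed to \cite[Lemma~6.3]{BDPP} and \cite[Proposition~3.1.7]{JJS}); this is fine here, but your proof is not fully self-contained without it.
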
   

\textit{Assumption:} Throughout the remainder of the paper, we assume that the directed tree $\mathscr T$ is always countably infinite and leafless. The weighted shift $S_{\lambdab}$ is a bounded weighted shift on $\mathscr T$ with positive weights.

Let $(a_n)_{n \in \mathbb N}$ be a sequence of complex numbers. We say that $(a_n)_{n \in \mathbb N}$ is {\it eventually periodic} if there exists $n_0 \in \mathbb N$ and an integer $k \geqslant 1$ such that $a_m = a_{m+k}$ for all $m \geqslant n_0$. If $n_0 = 0$, then $(a_n)_{n \in \mathbb N}$ is said to be periodic. Further, $(a_n)_{n \in \mathbb N}$ is said to be {\it non-periodic} if it is not eventually periodic. Note that if $n_0 = 0$ and $k=1$, then $(a_n)_{n \in \mathbb N}$ is a constant sequence. A balanced weighted shift $S_{\lambdab}$ on a rooted directed tree $\mathscr T = (V, \mathcal E)$ is said to be eventually periodic if the sequence $(c_n)_{n \in \mathbb N}$ is eventually periodic, where $c_n$ denotes the constant value of $\|S_{\lambdab} e_v\|$ for $v \in \mathscr G_n$. Similarly, a balanced weighted shift $S_{\lambdab}$ on a rooted directed tree $\mathscr T = (V, \mathcal E)$ is said to be non-periodic if the sequence $(c_n)_{n \in \mathbb N}$ is non-periodic. 

The Bergman and Dirichlet type weighted shifts are examples of non-periodic weighted shifts for $q>1$, whereas for $q=1$, these are periodic weighted shifts (in fact, $c_n = 1$ for all $n \in \mathbb N$).

We are now in a position to state the main result of this paper.

\begin{theorem}\label{Thm1.2}
Let $S_{\lambdab}$ and $\tilde S_{\lambdab}$ be balanced weighted shifts on rooted directed trees $\mathscr T = (V, \mathcal E)$ and $\tilde{\mathscr T} = (\tilde V, \tilde{\mathcal E})$, respectively with $E := \ker S_{\lambdab}^*$ and $\tilde E := \ker \tilde S_{\lambdab}^*$. Suppose that $S_{\lambdab}^{*n}S_{\lambdab}^n|_E$ and $\tilde S_{\lambdab}^{*n} \tilde S_{\lambdab}^n|_{\tilde E}$ are invertible operators on $E$ and $\tilde E$, respectively for all $n \in \mathbb N$. If $S_{\lambdab}$ and $\tilde S_{\lambdab}$ are non-periodic, then $S_{\lambdab}$ is unitarily equivalent to $\tilde S_{\lambdab}$ if and only if
\[c_n = \tilde c_n \text{ and Card}(\mathscr G_n) = \text{Card}(\tilde{\mathscr G}_n)\ \text{ for all }\ n \in \mathbb N,\]
where $c_n$ $($resp. $\tilde c_n)$ denotes the constant value of $\|S_{\lambdab} e_v\|$ $($resp. $\|\tilde S_{\lambdab} e_{\tilde v}\|)$ for $v \in \mathscr G_n\, ($resp. $\tilde v \in \tilde{\mathscr G}_n)$.
\end{theorem}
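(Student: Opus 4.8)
The plan is to use the model of Lemma \ref{power-ker-inva} to turn the problem into a question about the moment operators $A_n := S_{\lambdab}^{*n}S_{\lambdab}^n|_E$ and $\tilde A_n := \tilde S_{\lambdab}^{*n}\tilde S_{\lambdab}^n|_{\tilde E}$, and then to read off the invariants $c_n$ and $\text{Card}(\mathscr G_n)$ from their joint spectrum, non-periodicity being the feature that makes this decoding possible. Write $T_n := A_n^{1/2}$ and $\tilde T_n := \tilde A_n^{1/2}$; by hypothesis these are positive and invertible, with $T_0 = I_E$, $\tilde T_0 = I_{\tilde E}$. By Lemma \ref{power-ker-inva}, $S_{\lambdab}$ is unitarily equivalent to the operator weighted shift $W$ on $\bigoplus_{n\Ge 0} E$ with invertible weights $W_n = T_{n+1}T_n^{-1}$, and likewise $\tilde S_{\lambdab}\cong\tilde W$ with weights $\tilde W_n = \tilde T_{n+1}\tilde T_n^{-1}$. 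I would then invoke the unitary-equivalence criterion for operator weighted shifts with invertible weights: since invertibility of the weights gives $W^n(\ker W^*)$ equal to the $n$-th summand, any intertwining unitary preserves the grading and decomposes as $\bigoplus_n U_n$ with $U_n\colon E\to\tilde E$ unitary, so $W\cong\tilde W$ if and only if there exist unitaries $U_n$ with $U_{n+1}W_n = \tilde W_n U_n$ for all $n$.

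The next step is a telescoping that collapses this family of intertwiners to a single unitary. Setting $R_n := \tilde T_n^{-1}U_n T_n$, the relation $U_{n+1}W_n = \tilde W_n U_n$ becomes $R_{n+1}=R_n$, and since $T_0=I_E$, $\tilde T_0=I_{\tilde E}$ we get $R_n\equiv U_0$, that is $U_n = \tilde T_n U_0 T_n^{-1}$. Unitarity of each $U_n$ is then equivalent to $U_0^*\tilde T_n^2 U_0 = T_n^2$, i.e. to
\[
U_0^*\,\tilde A_n\, U_0 = A_n \qquad (n\Ge 1).
\]
Hence $S_{\lambdab}\cong\tilde S_{\lambdab}$ if and only if a single unitary $U_0\colon E\to\tilde E$ simultaneously conjugates the commuting family $\{\tilde A_n\}$ to $\{A_n\}$.

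I would then compute the $A_n$ explicitly from balancedness. Using \eqref{ker-adjoint-gen}, set $E_m := E\cap\ell^2(\mathscr G_m)$, so that $E=\bigoplus_m E_m$ with $\dim E_0 = 1$ and $E_m = \bigoplus_{v\in V_\prec\cap\mathscr G_{m-1}}\big(\ell^2(\child{v})\ominus[\lambdab^v]\big)$ for $m\Ge 1$. A short induction using local power balance shows that for $\xi\in E_m$ one has $\|S_{\lambdab}^n\xi\|^2 = \big(\prod_{k=m}^{m+n-1}c_k^2\big)\|\xi\|^2$, so each $A_n$ is diagonal with $A_n|_{E_m} = \mu_{m,n}I$, where $\mu_{m,n}=\prod_{k=m}^{m+n-1}c_k^2$. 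Counting dimensions gives $\dim E_m = \text{Card}(\mathscr G_m)-\text{Card}(\mathscr G_{m-1})$ for $m\Ge1$ (each branching $v$ contributes $\text{Card}(\child v)-1$, leaflessness making all terms nonnegative), so that $(\dim E_n)$ and $(\text{Card}(\mathscr G_n))$ determine each other via $\text{Card}(\mathscr G_n)=\sum_{m=0}^n\dim E_m$. The joint eigenspaces of the commuting family $\{A_n\}$ are the $E_m$, carrying the signature $s_m := (\mu_{m,n})_{n\Ge1}$, which is equivalent to the tail $(c_k)_{k\Ge m}$. The decisive use of the hypothesis is here: $s_m=s_{m'}$ with $m<m'$ forces $c_{k}=c_{k+(m'-m)}$ for all $k\Ge m$, i.e. eventual periodicity of $(c_k)$; since $(c_k)$ is non-periodic, $m\mapsto s_m$ is injective and the $E_m$ are pairwise distinct joint eigenspaces.

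Finally, a simultaneous conjugating unitary $U_0$ exists if and only if the joint spectral atoms match with multiplicities, i.e. the multisets $\{(s_m,\dim E_m)\}$ and $\{(\tilde s_m,\dim\tilde E_m)\}$ coincide. For sufficiency, $c_n=\tilde c_n$ yields $s_m=\tilde s_m$ and the cardinality condition yields $\dim E_m=\dim\tilde E_m$, so the atoms match and $U_0$ exists. For necessity, matching atoms means $c=(c_k)$ occurs as a tail of $\tilde c$ and $\tilde c$ as a tail of $c$, giving $c_k=\tilde c_{k+m'}$ and $\tilde c_k = c_{k+m}$, hence $c_k=c_{k+m+m'}$; non-periodicity forces $m=m'=0$, so $c=\tilde c$ and $c_n=\tilde c_n$, after which distinctness of the signatures forces the index-preserving match $\dim E_m=\dim\tilde E_m$, equivalently $\text{Card}(\mathscr G_n)=\text{Card}(\tilde{\mathscr G}_n)$. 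I expect the main obstacle to be this middle stage: correctly identifying the $A_n$ and isolating exactly where non-periodicity is indispensable, since without it the signatures coincide across generations, the joint eigenspaces merge, and $U_0$ may mix distinct $E_m$, so the per-generation invariant $\text{Card}(\mathscr G_n)$ is no longer recoverable—precisely the loophole that the eventually periodic counterexample is designed to exploit.
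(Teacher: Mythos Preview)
Your proposal is correct and follows the same overall strategy as the paper: reduce unitary equivalence of $S_{\lambdab}$ and $\tilde S_{\lambdab}$ to the existence of a single unitary $U_0\colon E\to\tilde E$ intertwining the moment families $\{A_n\}$ and $\{\tilde A_n\}$, compute that each $A_n$ acts as the scalar $\prod_{k=m}^{m+n-1}c_k^2$ on the $m$-th block $E_m$ (the paper's $W_m$), and then use non-periodicity via the ``tail'' argument ($c_j=\tilde c_{m'+j}$ and $\tilde c_j=c_{m+j}$ force $c_j=c_{m+m'+j}$, hence $m=m'=0$) to conclude that the block matching is index-preserving.

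The differences are presentational rather than structural. First, the paper obtains the single-unitary criterion by quoting Lemma~\ref{lem-3.5} (a specialization of \cite[Corollary~2.5]{GKT2024}), whereas you re-derive it from scratch via the operator-weighted-shift model and the telescoping substitution $R_n=\tilde T_n^{-1}U_nT_n$; your derivation is self-contained and makes transparent why a \emph{single} unitary suffices. Second, your joint-spectral-atom framing packages Lemma~\ref{lem-3.6} and the inductive step of Theorem~\ref{Thm3.5} into one stroke: once the signatures $s_m$ are shown to be pairwise distinct, the multiset matching immediately forces $\dim E_m=\dim\tilde E_m$ without a separate induction on $k$. The paper's route is more hands-on but reaches the same contradiction at each stage. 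One small slip: the model you invoke at the outset is Proposition~\ref{model-prop}, not Lemma~\ref{power-ker-inva} (the latter only establishes invertibility of $A_n$).
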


Theorem \ref{Thm1.2} generalizes \cite[Theorem 2.4]{CPT} which provides the unitary equivalence of Dirichlet type (and consequently Bergman type) weighted shifts associated with locally finite rooted directed trees. Further, it is worth emphasizing that the assumption that $S_{\lambdab}^{*n}S_{\lambdab}^n|_{\ker S_{\lambdab}^*}$ is invertible operator on $\ker S_{\lambdab}^*$ for all $n \in \mathbb N$, is very mild. In fact, in view of Lemma \ref{power-ker-inva}, it is weaker than the left-invertibility of $S_{\lambdab}$.

\section{Preparatory results}

In this section, we collect a couple of lemmas which are crucial for the proof of Theorem \ref{Thm1.2}. We proceed with the following lemma which describes a sort of multiplicative structure of the moments of a balanced weighted shift on a rooted directed tree.

\begin{lemma}\label{lemma-cor}
Let $S_{\lambdab}$ be a balanced weighted shift on a rooted directed tree $\mathscr T = (V, \mathcal E)$. Let $c_m$ denote the constant value of $\|S_{\lambdab} e_v\|$ for $v \in \mathscr G_m$, $m \in \mathbb N$. Then for each integer $n \geqslant 1$, we have 
\beqn\label{root-v-eq}
\|S^n_{\lambdab} e_v\| = c_{\dep v}\cdots c_{\dep v+n-1} \ \text{ for all }\ v \in V.
\eeqn
\end{lemma}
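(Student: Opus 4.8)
The plan is to argue by induction on $n$, exploiting the tree structure to split $S_{\lambdab}^{n+1} e_v$ into an orthogonal sum indexed by the children of $v$. For the base case $n = 1$, the identity $\|S_{\lambdab} e_v\| = c_{\dep v}$ is just the definition of $c_{\dep v}$: the vertex $v$ lies in $\mathscr G_{\dep v}$, and since $S_{\lambdab}$ is balanced, $\|S_{\lambdab} e_v\|$ is the constant value $c_{\dep v}$ attached to that generation. The right-hand side $c_{\dep v}\cdots c_{\dep v + n - 1}$ reduces to the single factor $c_{\dep v}$ when $n=1$, so the two sides agree.

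For the inductive step, assume the formula holds for a fixed $n \geqslant 1$ and all $v \in V$. Writing $S_{\lambdab} e_v = \sum_{w \in \child v} \lambda_w e_w$ and applying $S_{\lambdab}^n$, I would obtain
\[
S_{\lambdab}^{n+1} e_v = \sum_{w \in \child v} \lambda_w\, S_{\lambdab}^n e_w.
\]
The key observation is that for each $w \in \child v$ the vector $S_{\lambdab}^n e_w$ is supported on $\childn{n}{w}$, and since $\mathscr T$ is a tree, the descendant sets $\childn{n}{w}$ for distinct children $w$ of $v$ are pairwise disjoint (a common descendant would force two distinct ancestors at the same depth on its unique path to the root). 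Hence the summands are mutually orthogonal, giving
\[
\|S_{\lambdab}^{n+1} e_v\|^2 = \sum_{w \in \child v} |\lambda_w|^2\, \|S_{\lambdab}^n e_w\|^2.
\]
Every $w \in \child v$ satisfies $\dep w = \dep v + 1$, so the induction hypothesis yields $\|S_{\lambdab}^n e_w\| = c_{\dep v + 1}\cdots c_{\dep v + n}$, a value independent of $w$. Factoring it out and using $\sum_{w \in \child v} |\lambda_w|^2 = \|S_{\lambdab} e_v\|^2 = c_{\dep v}^2$ leaves
\[
\|S_{\lambdab}^{n+1} e_v\|^2 = \big(c_{\dep v + 1}\cdots c_{\dep v + n}\big)^2\, c_{\dep v}^2 = \big(c_{\dep v} c_{\dep v+1}\cdots c_{\dep v+n}\big)^2,
\]
which is exactly the claimed identity for $n+1$, completing the induction.

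The only step requiring genuine care is the orthogonality of the family $\{S_{\lambdab}^n e_w\}_{w \in \child v}$; this is precisely where the tree hypothesis enters, via disjointness of descendant sets, and it is the one place where a general directed graph would fail. Everything else is routine bookkeeping of the product of the $c$'s, and the balanced hypothesis is used only through the fact that $\|S_{\lambdab}^n e_w\|$ depends on $w$ solely via its depth, a fact supplied at each stage by the induction hypothesis itself. As an alternative route, one could avoid the induction altogether by expanding $S_{\lambdab}^n e_v$ explicitly over $\childn{n}{v}$ as in \cite[Proposition 3.1.7]{JJS} and invoking the locally power balanced reformulation of the balanced condition, but the inductive argument above seems the most transparent.
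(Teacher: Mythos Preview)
Your proof is correct and takes a genuinely different route from the paper's. The paper invokes \cite[Lemma~19]{DPP}, which for a balanced shift gives the path--product formula $\|S^{k}_{\lambdab} e_{\rootb}\| = \prod_{j=0}^{k-1}\|S_{\lambdab} e_{\parentn{j}{u}}\|$ along any descending path of length $k$ from the root; applying it once with length $\dep_v + n$ and once with length $\dep_v$ yields the identity $\|S^{\dep_v + n}_{\lambdab} e_{\rootb}\| = \|S^n_{\lambdab} e_v\|\,\|S^{\dep_v}_{\lambdab} e_{\rootb}\|$, and dividing produces the claimed product of $c$'s. Your argument instead inducts on $n$, splitting $S_{\lambdab}^{n+1} e_v$ over the children of $v$ and using that the supports $\childn{n}{w}$ are pairwise disjoint in a tree. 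The advantage of your approach is that it is entirely self-contained and isolates exactly where the tree hypothesis and the balanced hypothesis are each used; the paper's version is shorter on the page but outsources the key combinatorial fact to an external reference. Both are perfectly valid, and in fact your induction is essentially what underlies the cited lemma.
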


\begin{proof}
Let $v \in V$. It follows from \cite[Lemma 19]{DPP} that for all integer $n \geqslant 1$ and $u \in \childn{n-1}{v}$, 
\beqn
\|S^{\dep_v +n}_\lambda e_{\rootb}\| &=& \Big(\prod_{j=0}^{n-1} \|S_\lambda e_{\parentn{j}{u}}\| \Big) \|S_\lambda e_{\parent{v}}\| \cdots \|S_\lambda e_{\rootb}\| \\
&=& \|S^n_\lambda e_{v}\| \|S^{\dep_{v}}_\lambda e_{\rootb}\|.
\eeqn
This gives that
\beqn
\|S^n_\lambda e_{v}\| = \frac{\|S^{\dep_v +n}_\lambda e_{\rootb}\|}{\|S^{\dep_{v}}_\lambda e_{\rootb}\|} = \frac{c_0 \cdots c_{\dep v+n-1}}{c_0 \cdots c_{\dep v-1}} = c_{\dep v}\cdots c_{\dep v+n-1}
\eeqn
for all $v \in V$ and for all integer $n \geqslant 1$.
\end{proof}

\begin{lemma}\label{power-ker-inva}
Let $S_{\lambdab}$ be a balanced weighted shift on a rooted directed tree $\mathscr T = (V, \mathcal E)$ and $E := \ker S_{\lambdab}^*$. Then $S_{\lambdab}^{*n}S_{\lambdab}^n(E)$ is a dense subspace of $E$ for all $n \in \mathbb N$. Consequently, $S_{\lambdab}^{*n}S_{\lambdab}^n|_E$ is a positive operator on $E$ for all $n \in \mathbb N$. Moreover, if any of the following conditions is satisfied, then $S_{\lambdab}^{*n}S_{\lambdab}^n|_E$ is an invertible operator on $E$ for all $n \in \mathbb N$:
\begin{itemize}
\item[(i)] $E$ is finite dimensional.
\item[(ii)] $S_{\lambdab}$ is left-invertible.
\item[(iii)] $\mathscr T$ is of finite branching index.
\end{itemize}  
\end{lemma}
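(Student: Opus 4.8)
The plan is to exploit the fact that, for any weighted shift on a rooted directed tree, the canonical orthonormal basis $\{e_u\}_{u \in V}$ diagonalizes $S_{\lambdab}^{*n}S_{\lambdab}^n$, and then to use the balanced hypothesis to see that the resulting eigenvalues depend only on the depth of the vertex. The starting point is a direct computation: writing $\lambda_{w|u}$ for the product of the weights along the unique path from $u$ to $w$, we have $S_{\lambdab}^n e_u = \sum_{w \in \childn{n}{u}} \lambda_{w|u}\, e_w$, while $S_{\lambdab}^{*n} e_w = \lambda_{w|u}\, e_u$ whenever $\parentn{n}{w} = u$, so that
\[
S_{\lambdab}^{*n} S_{\lambdab}^n e_u = \Big(\sum_{w \in \childn{n}{u}} \lambda_{w|u}^2\Big) e_u = \|S_{\lambdab}^n e_u\|^2\, e_u, \qquad u \in V,\ n \geq 1,
\]
the case $n=0$ being trivial. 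By Lemma \ref{lemma-cor} the scalar $\|S_{\lambdab}^n e_u\|^2 = (c_{\dep u}\cdots c_{\dep u + n -1})^2$ depends only on $\dep u$; denote it by $\alpha_d^{(n)}$ for $d = \dep u$. Since $\mathscr T$ is leafless with positive weights, each $c_d>0$, so every $\alpha_d^{(n)}>0$.

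Next I would invoke the description \eqref{ker-adjoint-gen} of $E = \ker S_{\lambdab}^*$, whose summands are each supported at a single depth: $[e_{\rootb}]$ at depth $0$, and $\ell^2(\child v) \ominus [\lambdab^v]$ with $v \in \mathscr G_m$ at depth $m+1$. On the span of $\{e_u : u \in \child v\}$ the operator $S_{\lambdab}^{*n}S_{\lambdab}^n$ acts as the single scalar $\alpha_{m+1}^{(n)}$, because all children of $v$ share the depth $m+1$; hence it preserves both $[\lambdab^v]$ and its orthocomplement $\ell^2(\child v)\ominus[\lambdab^v]$, and it scales $[e_{\rootb}]$ by $\alpha_0^{(n)}$. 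Consequently $S_{\lambdab}^{*n}S_{\lambdab}^n$ maps $E$ into $E$ and is the positive diagonal operator on $E$ multiplying the depth-$d$ summand by $\alpha_d^{(n)}$; positivity is also immediate from $\langle S_{\lambdab}^{*n}S_{\lambdab}^n x, x\rangle = \|S_{\lambdab}^n x\|^2 \geq 0$. For density of the range, note that since each $\alpha_d^{(n)}>0$, every summand $W$ in \eqref{ker-adjoint-gen} satisfies $W = \alpha\, W = S_{\lambdab}^{*n}S_{\lambdab}^n(W) \subseteq S_{\lambdab}^{*n}S_{\lambdab}^n(E)$, so the range contains the algebraic direct sum of these summands, which is dense in $E$.

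Finally, a positive diagonal operator is invertible precisely when its eigenvalues are bounded away from $0$, i.e. when $\inf_{d \in D} \alpha_d^{(n)} > 0$, where $D := \{0\} \cup \{\dep v + 1 : v \in V_\prec\}$ collects the depths occurring in \eqref{ker-adjoint-gen}. I would then dispose of the three cases. If $E$ is finite dimensional, then $V_\prec$ is finite (each branching vertex contributes a nonzero summand), so $D$ is finite and the infimum is a minimum of finitely many positive numbers; if $\mathscr T$ has finite branching index, then $\dep v$ is bounded on $V_\prec$, so again $D$ is finite and the same holds; and if $S_{\lambdab}$ is left-invertible, there is $\delta>0$ with $\|S_{\lambdab}x\| \geq \delta\|x\|$ for all $x$, whence $c_d = \|S_{\lambdab}e_v\| \geq \delta$ for $v \in \mathscr G_d$ and thus $\alpha_d^{(n)} \geq \delta^{2n}>0$ uniformly in $d$. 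In each case $\inf_{d\in D}\alpha_d^{(n)}>0$, giving invertibility for all $n$. The one point that must be handled with care is the invariance $S_{\lambdab}^{*n}S_{\lambdab}^n(E)\subseteq E$, which hinges entirely on the balanced hypothesis forcing the diagonal entry to be constant across the children of any fixed branching vertex; once that is in place, the positivity, density, and invertibility assertions are routine.
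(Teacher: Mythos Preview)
Your proof is correct and takes a genuinely different, more computational route than the paper's. You diagonalize $S_{\lambdab}^{*n}S_{\lambdab}^n$ explicitly on the standard basis $\{e_u\}$, identify the eigenvalue at $e_u$ as $\alpha_{\dep u}^{(n)} = (c_{\dep u}\cdots c_{\dep u+n-1})^2$ via Lemma~\ref{lemma-cor}, and then observe that each summand in \eqref{ker-adjoint-gen} lies at a single depth and is therefore scaled by a single positive scalar. From this, invariance of $E$, positivity, density of the range, and the invertibility criterion $\inf_{d\in D}\alpha_d^{(n)}>0$ all drop out at once, and the three cases become a short check that this infimum is positive. The paper instead derives invariance $S_{\lambdab}^{*n}S_{\lambdab}^n(E)\subseteq E$ from Theorem~\ref{BDPP} (orthogonality of the subspaces $S_{\lambdab}^m(E)$) together with the wandering subspace property, proves density by showing $E\ominus S_{\lambdab}^{*n}S_{\lambdab}^n(E)=\{0\}$, handles case~(ii) by a Cauchy-sequence argument to show the range is closed, and treats case~(iii) by explicitly constructing a preimage and bounding its norm. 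Your argument is more elementary and self-contained---it never needs Theorem~\ref{BDPP} or the wandering subspace decomposition---while the paper's approach situates the lemma within the analytic-model machinery that is used again in Proposition~\ref{analytic-model}.
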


\begin{proof}
We first show that $S_{\lambdab}^{*n}S_{\lambdab}^n(E) \subseteq E$ for all $n \in \mathbb N$. Since $S_{\lambdab}$ is balanced, it follows from Theorem \ref{BDPP} that for all $m, n \in \mathbb N$, $S^n_{\lambdab}(E) \perp S^m_{\lambdab}(E)$ if $m \ne n$. Thus, for any $x, y \in E$ and $n \in \mathbb N$, we get 
\[\inp{S_{\lambdab}^{*n}S_{\lambdab}^n x}{S^m_{\lambdab} y} = 0 \ \text{ for all integer }\ m \geq 1.\]
That is, $\bigoplus_{m=1}^\infty S_{\lambdab}^m(E) \subseteq S_{\lambdab}^{*n}S_{\lambdab}^n(E)^\perp$. Note that every bounded weighted shift on a rooted directed tree has the wandering subspace property (see \cite[Proposition 1.3.4]{CPT2017}). Therefore, we obtain that 
\[\ell^2(V) = \bigoplus_{n=0}^\infty S_{\lambdab}^n(E).\]
This, together with above, gives us $S_{\lambdab}^{*n}S_{\lambdab}^n(E) \subseteq E$ for all $n \in \mathbb N$. Now fix $n \in \mathbb N$ and let $x \in E \ominus S_{\lambdab}^{*n}S_{\lambdab}^n(E)$. Then 
\[\inp{S_{\lambdab}^{*n}S_{\lambdab}^n y}{x} = 0 \ \text{ for all }\ y \in E.\]
In particular, $\|S_{\lambdab}^n x\| = 0$, which implies that $x=0$. Thus, $S_{\lambdab}^{*n}S_{\lambdab}^n(E)$ is a dense subspace of $E$. That $S_{\lambdab}^{*n}S_{\lambdab}^n|_E$ is a positive operator on $E$ for all $n \in \mathbb N$, is obvious. 

To see the moreover part, fix $n \in \mathbb N$. Note that if $E$ is finite dimensional, then the invertibility of $S_{\lambdab}^{*n}S_{\lambdab}^n|_E$ follows from the rank-nullity theorem. Assume that $S_{\lambdab}$ is left-invertible. Then there exists $\alpha(n) > 0$ such that $\|S_{\lambdab}^n e_v\| \geqslant \alpha(n)$ for all $v \in V$. Let $S_{\lambdab}^{*n}S_{\lambdab}^n x_m \to y$ as $m \to \infty$, where $x_m = \sum_{v \in V} x_m(v) e_v \in E$ for all $m \geqslant 1$. Then for all $m,k \geqslant 1$, we have
\[\|S_{\lambdab}^{*n}S_{\lambdab}^n (x_m-x_k)\|^2 = \sum_{v \in V}|x_m(v) - x_k(v)|^2 \|S_{\lambdab}^n e_v\|^4 \geqslant \alpha(n)^4 \|x_m - x_k\|^2. \]
Thus, $(x_m)_{m \geqslant 1}$ is a Cauchy sequence in $E$. Let $x_m \to x \in E$ as $m \to \infty$. Then $y = S_{\lambdab}^{*n}S_{\lambdab}^n x$. This shows that $S_{\lambdab}^{*n}S_{\lambdab}^n(E)$ is a closed subspace of $E$, and hence by preceding paragraph, $S_{\lambdab}^{*n}S_{\lambdab}^n(E) = E$. Consequently, $S_{\lambdab}^{*n}S_{\lambdab}^n|_E$ is an invertible operator on $E$.

Finally, suppose that the branching index $k_{\mathscr T}$ of $\mathscr T$ is finite. Fix $n \in \mathbb N$. Let $c_m$ denote the constant value of $\|S_{\lambdab} e_v\|$ for $v \in \mathscr G_m$, $m \in \mathbb N$. Then $c := \min\{c_0, \ldots, c_{k_{\mathscr T}}\} > 0$. Let $x \in E$. Then it follows from \eqref{ker-adjoint} that
\[x = a e_{\mathsf{root}} + \sum_{v \in V_\prec} b_v\ \text{ for some }\ a \in \mathbb C \ \text{ and }\ b_v \in \ell^2(\mathsf{Chi}(v)) \ominus [\lambdab^v].\]
In view of Lemma \ref{lemma-cor}, we get that 
\[S_{\lambdab}^{*n}S_{\lambdab}^n b_v = \|S_{\lambdab}^n e_w\|^2 b_v = c_{\dep v+1}^2\cdots c_{\dep v+n}^2 b_v,\]
where $w$ is any element of $\mathsf{Chi}(v)$. Consider
\[y = \frac{a}{c_0^2\cdots c_{n-1}^2} e_{\mathsf{root}} + \sum_{v \in V_\prec} \frac{b_v}{c_{\dep v+1}^2\cdots c_{\dep v+n}^2}.\]
Then 
\beqn
\|y\|^2 &=& \frac{|a|^2}{(c_0^2\cdots c_{n-1}^2)^2} + \sum_{v \in V_\prec} \frac{\|b_v\|^2}{(c_{\dep v+1}^2\cdots c_{\dep v+n}^2)^2}\\
&\leqslant& \frac{|a|^2}{c^{4n}} + \sum_{v \in V_\prec} \frac{\|b_v\|^2}{c^{4n}} = \frac{1}{c^{4n}} \|x\|^2 < \infty. 
\eeqn
Further, it can be easily verified that $y \in E$ and $S_{\lambdab}^{*n}S_{\lambdab}^n y = x$. Hence, $S_{\lambdab}^{*n}S_{\lambdab}^n|_E$ is an invertible operator on $E$.
\end{proof}

\section{Classification}

In this section, we classify all the non-periodic balanced weighted shifts $S_{\lambdab}$ on rooted directed trees under a mild assumption that $S_{\lambdab}^{*n}S_{\lambdab}^n|_{\ker S_{\lambdab}^*}$ is invertible operator on $\ker S_{\lambdab}^*$ for all $n \in \mathbb N$. In order to get the desired classification, we first model $S_{\lambdab}$ as an operator of multiplication by the coordinate function $z$ on the Hilbert space of $\ker S_{\lambdab}^*$-valued formal power series associated with the sequence $(S_{\lambdab}^{*n}S_{\lambdab}^n|_{\ker S_{\lambdab}^*})_{n \in \mathbb N}$ of invertible operators on $\ker S_{\lambdab}^*$. For the sake of convenience, we briefly recall a few basic things about a Hilbert space of vector-valued formal power series. The reader is referred to \cite{GKT, GKT1, GKT2024} for a detailed study on such Hilbert spaces. 

Let $E$ be a complex separable Hilbert space. An $E$-valued formal power series refers to a series of the form $\sum_{n=0}^\infty x_n z^n,\ x_n \in E$, without any concern for its convergence at any point $z \in \mathbb C$. Let $\mathscr B=\{B_n : n \in \mathbb N\}$ be a sequence of bounded linear operators on $E$ and consider
\beqn
\mathcal H^2_E(\mathscr B) := \bigg\{\sum_{n=0}^\infty x_n z^n : \sum_{n=0}^\infty \|B_n x_n\|^2 < \infty \bigg\}
\eeqn
equipped with the following inner product: 
\beqn
\inp{f}{g}_{\mathcal H^2_E(\mathscr B)} :=  \sum_{n=0}^\infty \inp{B_n x_n}{B_n y_n}_E, \quad f = \sum_{n=0}^\infty x_n z^n,\ g = \sum_{n=0}^\infty y_n z^n \in \mathcal H^2_E(\mathscr B).
\eeqn     
Then $\mathcal H^2_E(\mathscr B)$ is a Hilbert space known as the {\it Hilbert space of $E$-valued formal power series}. Note that $E$-valued polynomials are dense in $\mathcal H^2_E(\mathscr B)$. Thus, the operator $\mathscr M_z$ of multiplication by the coordinate function $z$ is a densely defined operator in $\mathcal H^2_E(\mathscr B)$ which may not be bounded in general. It turns out that a balanced weighted shift on a rooted directed tree can be realized as (a bounded) $\mathscr M_z$ on $\mathcal H^2_E(\mathscr B)$ for some $E$ and $\mathscr B$, which is illustrated in the following proposition.

\begin{proposition}\label{model-prop}
Let $S_{\lambdab}$ be a balanced weighted  shift on a rooted directed tree $\mathscr T = (V, \mathcal E)$ and $E := \ker S_{\lambdab}^*$. Then $S_{\lambdab}$ is unitarily equivalent to $\mathscr M_z$ on $\mathcal H^2_E(\mathscr B)$ with $\mathscr B=\big\{(S_{\lambdab}^{*n}S_{\lambdab}^n|_E)^{1/2} : n \in \mathbb N\big\}$. 
\end{proposition}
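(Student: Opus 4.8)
The plan is to build an explicit unitary $U \colon \mathcal H^2_E(\mathscr B) \to \ell^2(V)$ that carries $\mathscr M_z$ to $S_{\lambdab}$, assembled from the orthogonal decomposition of $\ell^2(V)$ into the iterated images of the wandering subspace $E$. The single observation that drives the whole argument is that, for $x \in E$ and $n \in \mathbb N$,
\[
\|B_n x\|^2 = \inp{B_n^2 x}{x}_E = \inp{S_{\lambdab}^{*n} S_{\lambdab}^n x}{x} = \|S_{\lambdab}^n x\|^2,
\]
because $B_n = (S_{\lambdab}^{*n}S_{\lambdab}^n|_E)^{1/2}$. Thus the weight attached to the $n$-th coefficient in $\mathcal H^2_E(\mathscr B)$ records precisely the norm of $S_{\lambdab}^n$ applied to that coefficient.

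First I would define $U$ on $f = \sum_{n=0}^\infty x_n z^n \in \mathcal H^2_E(\mathscr B)$ by $U f := \sum_{n=0}^\infty S_{\lambdab}^n x_n$. Since $S_{\lambdab}$ is balanced, Theorem \ref{BDPP} guarantees that the subspaces $\{S_{\lambdab}^n(E)\}_{n \in \mathbb N}$ are mutually orthogonal; hence the partial sums of $\sum_n S_{\lambdab}^n x_n$ have pairwise orthogonal increments and $\sum_n \|S_{\lambdab}^n x_n\|^2 = \sum_n \|B_n x_n\|^2 = \|f\|_{\mathcal H^2_E(\mathscr B)}^2 < \infty$. Therefore the series converges in $\ell^2(V)$, and the same identity shows at once that $U$ is a well-defined linear isometry.

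Next I would verify surjectivity. Every bounded weighted shift on a rooted directed tree enjoys the wandering subspace property (as recalled in the proof of Lemma \ref{power-ker-inva}), so $\ell^2(V) = \bigoplus_{n=0}^\infty S_{\lambdab}^n(E)$; since an arbitrary element of $S_{\lambdab}^n(E)$ has the form $S_{\lambdab}^n x_n$ with $x_n \in E$, the range of $U$ is all of $\ell^2(V)$, and $U$ is unitary. The intertwining is then a direct computation: for $f = \sum_n x_n z^n$ one has $\mathscr M_z f = \sum_n x_n z^{n+1}$, whence
\[
U \mathscr M_z f = \sum_{n=0}^\infty S_{\lambdab}^{n+1} x_n = S_{\lambdab}\sum_{n=0}^\infty S_{\lambdab}^n x_n = S_{\lambdab} U f .
\]
The estimate $\|\mathscr M_z f\|^2 = \sum_n \|S_{\lambdab}^{n+1} x_n\|^2 \le \|S_{\lambdab}\|^2 \sum_n \|B_n x_n\|^2 = \|S_{\lambdab}\|^2\|f\|^2$ confirms that $\mathscr M_z$ maps $\mathcal H^2_E(\mathscr B)$ boundedly into itself, so the identity $U \mathscr M_z = S_{\lambdab} U$ holds on the whole space and gives $S_{\lambdab} = U \mathscr M_z U^{*}$.

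I expect no serious obstacle: the entire content is packaged in the two structural inputs, namely orthogonality of the iterated wandering subspaces (balancedness, via Theorem \ref{BDPP}) and the wandering subspace property furnishing completeness. The only points demanding a little care are the convergence of the series defining $U$ (handled by orthogonality together with square-summability) and the check that $\mathscr M_z$ is genuinely bounded on $\mathcal H^2_E(\mathscr B)$ and not merely densely defined on polynomials.
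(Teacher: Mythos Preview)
Your proof is correct and follows essentially the same route as the paper: both exploit the orthogonal decomposition $\ell^2(V)=\bigoplus_{n\ge 0} S_{\lambdab}^n(E)$ coming from Theorem \ref{BDPP} together with the wandering subspace property, and both identify the norm on $\mathcal H^2_E(\mathscr B)$ via $\|B_n x\|=\|S_{\lambdab}^n x\|$; the only cosmetic difference is that the paper defines the unitary in the reverse direction, $\ell^2(V)\to\mathcal H^2_E(\mathscr B)$. Your write-up is in fact more explicit about why the defining series converges and why $\mathscr M_z$ is bounded, points the paper leaves to the reader.
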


\begin{proof}
We first show that $\mathscr M_z$ is bounded on $\mathcal H^2_E(\mathscr B)$. To this end, for any $f = \sum_{n=0}^\infty x_n z^n \in \mathcal H^2_E(\mathscr B)$, we have
\beqn
\|\mathscr M_z f\|^2 &=& \Big\|\sum_{n=0}^\infty x_n z^{n+1}\Big\|^2 = \sum_{n=0}^\infty \big\langle S_{\lambdab}^{*n+1}S_{\lambdab}^{n+1} x_n,\ x_n\big\rangle = \sum_{n=0}^\infty \|S_{\lambdab}^{n+1} x_n\|^2\\
&\leqslant& \|S_{\lambdab}\|^2 \sum_{n=0}^\infty \|S_{\lambdab}^{n} x_n\|^2 = \|S_{\lambdab}\|^2 \|f\|^2 < \infty.
\eeqn
Thus, $\mathscr M_z$ is bounded. 

Since $S_{\lambdab}$ is a balanced weighted shift and every bounded weighted shift on a rooted directed tree has the wandering subspace property (see \cite[Proposition 1.3.4]{CPT2017}), from Theorem \ref{BDPP}, we get 
\beqn
\ell^2(V) = \bigoplus_{n=0}^\infty S_{\lambdab}^n(E).
\eeqn
Now define $U : \ell^2(V) \to \mathcal H^2_E(\mathscr B)$ as
\beqn
U\Big(\Oplus_{n=0}^\infty S_{\lambdab}^n x_n\Big) = \sum_{n=0}^\infty x_n z^n, \quad \Oplus_{n=0}^\infty S_{\lambdab}^n x_n \in \ell^2(V).
\eeqn
It can be easily verified that $U$ is unitary and $US_{\lambdab} = \mathscr M_z U$. 
\end{proof}

It is very natural to ask when the elements of $\mathcal H^2_E(\mathscr B)$ define $E$-valued holomorphic functions. The answer lies in determining the set of all bounded point evaluations on $\mathcal H^2_E(\mathscr B)$. Note that $E$-valued polynomials are dense in $\mathcal H^2_E(\mathscr B)$. Therefore, for $w \in \mathbb C$, let us consider the evaluation map $\mathscr E_w$ given by $\mathscr E_w p = p(w)$ for all polynomials $p \in \mathcal H^2_E(\mathscr B)$. We say that $w \in \mathbb C$ is a bounded point evaluation on $\mathcal H^2_E(\mathscr B)$ if the evaluation map $\mathscr E_w$ extends to a continuous linear map from $\mathcal H^2_E(\mathscr B)$ onto $E$. The continuous extension of $\mathscr E_w$, by an abuse of notation, is denoted by $\mathscr E_w$ itself. Let $\Omega$ denote the set of all bounded point evaluations on $\mathcal H^2_E(\mathscr B)$. It turns out that if $S_{\lambdab}$ is left-invertible, then the open disc $\mathbb D\big(0, r(S'_{\lambdab})^{-1}\big)$ is contained in $\Omega$, where $r(T)$ denotes the spectral radius of a bounded linear operator $T$ and $T' := T(T^*T)^{-1}$ denotes the Cauchy dual of a left-invertible operator $T$. Although this inclusion follows from \cite{S2001} but here we provide a direct proof.
 
\begin{proposition}\label{bpe-prop}
Let $S_{\lambdab}$ be a balanced weighted shift on a rooted directed tree $\mathscr T = (V, \mathcal E)$ and $E := \ker S_{\lambdab}^*$. If $S_{\lambdab}$ is left-invertible, then the set $\Omega$ of all bounded point evaluations on $\mathcal H^2_E(\mathscr B)$ contains $\mathbb D\big(0, r(S'_{\lambdab})^{-1}\big)$.
\end{proposition}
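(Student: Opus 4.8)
The plan is to reduce the statement to a convergence estimate for a scalar power series and then to bound its radius of convergence by $r(S'_{\lambdab})$. Write $A_n := S_{\lambdab}^{*n}S_{\lambdab}^n|_E$, so that the weight $B_n = A_n^{1/2}$; by left-invertibility together with Lemma \ref{power-ker-inva} each $A_n$ is a positive invertible operator on $E$, and $\|B_n^{-1}\| = \|A_n^{-1}\|^{1/2}$. First I would fix $w \in \C$ and, for an arbitrary $E$-valued polynomial $p = \sum_{n=0}^N x_n z^n$, estimate
\[
\|p(w)\|_E \Le \sum_{n=0}^N |w|^n\|x_n\|_E \Le \sum_{n=0}^N |w|^n\|B_n^{-1}\|\,\|B_n x_n\|_E \Le \Big(\sum_{n=0}^N |w|^{2n}\|A_n^{-1}\|\Big)^{1/2}\|p\|_{\mathcal H^2_E(\mathscr B)},
\]
using $\|x_n\|_E \Le \|B_n^{-1}\|\,\|B_n x_n\|_E$, the Cauchy--Schwarz inequality, and $\|p\|^2 = \sum_n \|B_n x_n\|^2$. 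Since polynomials are dense, $\mathscr E_w$ then extends continuously to $\mathcal H^2_E(\mathscr B)$ (and the extension is onto $E$, as it fixes the constants) as soon as $\sum_{n=0}^\infty |w|^{2n}\|A_n^{-1}\| < \infty$. By the root test this holds whenever $|w|\,\limsup_n \|A_n^{-1}\|^{1/(2n)} < 1$, so the whole proposition follows once I show
\[
\limsup_{n\to\infty}\|A_n^{-1}\|^{1/(2n)} \Le r(S'_{\lambdab}).
\]

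The key step, and the main obstacle, is to relate the growth of $\|A_n^{-1}\|$ to the Cauchy dual. For this I would pass to the model of Proposition \ref{model-prop}, in which $S_{\lambdab}$ becomes $\mathscr M_z$ on $\mathcal H^2_E(\mathscr B)$ and hence $S'_{\lambdab}$ becomes $\mathscr M_z' := \mathscr M_z(\mathscr M_z^*\mathscr M_z)^{-1}$, which is meaningful because left-invertibility of $S_{\lambdab}$ makes $\mathscr M_z$ left-invertible with invertible operator weights. A short computation on monomials gives $\mathscr M_z^*\mathscr M_z(x z^n) = (A_n^{-1}A_{n+1}x)z^n$, hence $\mathscr M_z'(x z^n) = (A_{n+1}^{-1}A_n x)z^{n+1}$; thus $\mathscr M_z'$ is again an operator-valued weighted shift, and telescoping from the constants (using $A_0 = I$) yields
\[
(\mathscr M_z')^n(x z^0) = (A_n^{-1}x)z^n, \qquad \|(\mathscr M_z')^n(x z^0)\|^2 = \langle A_n^{-1}x,\,x\rangle_E \quad (x \in E).
\]

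Taking the supremum over unit vectors $x \in E$ (recall $\|x z^0\| = \|x\|_E$ since $A_0 = I$, and $A_n^{-1}$ is positive) gives $\|(\mathscr M_z')^n\|^2 \Ge \|A_n^{-1}\|$, that is $\|A_n^{-1}\|^{1/2} \Le \|(S'_{\lambdab})^n\|$, because norms of powers are preserved under the unitary equivalence of Proposition \ref{model-prop}. Finally I would invoke Gelfand's formula $\lim_n \|(S'_{\lambdab})^n\|^{1/n} = r(S'_{\lambdab})$ to conclude $\limsup_n\|A_n^{-1}\|^{1/(2n)} \Le r(S'_{\lambdab})$, which is precisely the estimate required above; combined with the first paragraph this shows every $w$ with $|w| < r(S'_{\lambdab})^{-1}$ lies in $\Omega$. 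The only genuine subtlety is the bookkeeping of the operator weights in the model and the verification that the Cauchy dual of $\mathscr M_z$ is itself an operator-valued weighted shift (with weights $A_{n+1}^{-1}A_n$); once this is in place, the telescoping identity makes the growth rate of $\|A_n^{-1}\|$ transparent and the spectral-radius bound immediate.
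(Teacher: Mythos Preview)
Your argument is correct and follows the same overall strategy as the paper (bound $\|p(w)\|$ by Cauchy--Schwarz, then control the resulting scalar series via the spectral radius of $S'_{\lambdab}$), but you take a detour that the paper avoids. Where you first estimate $\|x_n\|_E \Le \|A_n^{-1}\|^{1/2}\|B_n x_n\|_E$ and then devote the bulk of the proof to showing $\|A_n^{-1}\|^{1/2} \Le \|(S'_{\lambdab})^n\|$ via the model of Proposition~\ref{model-prop}, the paper observes in one line that $S'^{*}_{\lambdab}S_{\lambdab} = (S_{\lambdab}^*S_{\lambdab})^{-1}S_{\lambdab}^*S_{\lambdab} = I$, hence $S'^{*n}_{\lambdab}S_{\lambdab}^n = I$, and so $\|x_n\| = \|S'^{*n}_{\lambdab}S_{\lambdab}^n x_n\| \Le \|S'^{n}_{\lambdab}\|\,\|S_{\lambdab}^n x_n\|$ directly. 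This immediately gives $\|f(w)\| \Le \big(\sum_n |w|^{2n}\|S'^{n}_{\lambdab}\|^2\big)^{1/2}\|f\|$, and the series converges for $|w| < r(S'_{\lambdab})^{-1}$, which is exactly the root-test/Gelfand step you carry out. Your model computation of $\mathscr M_z'$ and the telescoping identity are correct and interesting in their own right, but they are not needed here; the algebraic identity $S'^{*}_{\lambdab}S_{\lambdab} = I$ short-circuits the whole second half of your argument.
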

  
\begin{proof}
Suppose that $S_{\lambdab}$ is left-invertible. Let $f = \sum_{n=0}^\infty x_n z^n \in \mathcal H^2_E(\mathscr B)$ and $w \in \mathbb D\big(0, r(S'_{\lambdab})^{-1}\big)$. Then
\beqn
\|f(w)\| &\leqslant& \sum_{n=0}^\infty |w|^n \|x_n\| = \sum_{n=0}^\infty |w|^n \|S_{\lambdab}'^{*n} S_{\lambdab}^n x_n\|\\ 
&\leqslant& \left(\sum_{n=0}^\infty |w|^{2n} \|S_{\lambdab}'^{n}\|^2\right)^\frac{1}{2} \left(\sum_{n=0}^\infty \|S_{\lambdab}^n x_n\|^2\right)^\frac{1}{2} = \eta(w) \|f\|,
\eeqn
where $\eta(w) = \left(\sum_{n=0}^\infty |w|^{2n} \|S_{\lambdab}'^{n}\|^2\right)^\frac{1}{2} < \infty$ by virtue of the fact that $|w| < 1/r(S_{\lambdab}')$. This shows that the evaluation map $\mathscr E_w : \mathcal H^2_E(\mathscr B) \to E$ is continuous. Consequently, $\mathbb D\big(0, r(S'_{\lambdab})^{-1}\big) \subseteq \Omega$.
\end{proof}

If $\Omega$ has non-empty interior $\Omega^\circ$, it follows from the general theory \cite{PR} that $\mathcal H^2_E(\mathscr B)$ is a reproducing kernel Hilbert space of $E$-valued holomorphic functions defined on $\Omega^\circ$. In addition, if $S_{\lambdab}^{*n}S_{\lambdab}^n|_E$ is invertible operator on $E$ for all $n \in \mathbb N$, then from \cite[eq.(11)]{GKT1}, the reproducing kernel $\kappa : \Omega^\circ \times \Omega^\circ \to \mathcal B(E)$ is given by 
$$\kappa(z,w) = \mathscr E_z \mathscr E_w^* = \sum_{n=0}^\infty (S_{\lambdab}^{*n}S_{\lambdab}^n|_E)^{-1} z^n \bar{w}^n, \quad z,w \in \Omega^\circ.$$

The following proposition shows that the above expression of $\kappa$ can be further refined to get more explicit form. It also generalizes the analytic models obtained in \cite{CPT} for the Bergman and Dirichlet type weighted shifts associated with locally finite rooted directed trees.

\begin{proposition}\label{analytic-model}
Let $S_{\lambdab}$ be a balanced weighted shift on a rooted directed tree $\mathscr T = (V, \mathcal E)$ and $E := \ker S_{\lambdab}^*$. Suppose that $S_{\lambdab}^{*n}S_{\lambdab}^n|_E$ is invertible operator on $E$ for all $n \in \mathbb N$. If $\Omega^\circ$ is non-empty, then $\mathcal H^2_E(\mathscr B)$ is a reproducing kernel Hilbert space of $E$-valued holomorphic functions defined on $\Omega^\circ$ and the reproducing kernel $\kappa : \Omega^\circ \times \Omega^\circ \to \mathcal B(E)$ is given by
\beqn
\kappa(z,w) = \sum_{n=0}^\infty \frac{z^n \overline{w}^n}{c_0^2 \cdots c_{n-1}^2} P_{[e_{\mathsf{root}}]} + \sum_{v \in V_\prec} \sum_{n=0}^\infty \frac{z^n \overline{w}^n}{c_{\dep v+1}^2 \cdots c_{\dep v+n}^2} P_{\ell^2(\mathsf{Chi}(v)) \ominus [\lambdab^v]},
\eeqn
for all $z, w \in \Omega^\circ$, where $c_n$ denotes the constant value of $\|S_{\lambdab} e_v\|$ for $v \in \mathscr G_n$, and $P_M$ denotes the orthogonal projection onto a closed subspace $M$ of a Hilbert space $H$.
\end{proposition}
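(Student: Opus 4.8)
The plan is to reduce the statement to the series expression for the reproducing kernel recorded immediately before the proposition, namely
\[
\kappa(z,w) = \sum_{n=0}^\infty (S_{\lambdab}^{*n}S_{\lambdab}^n|_E)^{-1}\, z^n \overline{w}^n, \qquad z, w \in \Omega^\circ,
\]
which, together with \cite{PR} and \cite[eq.(11)]{GKT1}, already furnishes the reproducing kernel Hilbert space structure of $E$-valued holomorphic functions on $\Omega^\circ$. Consequently, the only genuine task is to compute each inverse $(S_{\lambdab}^{*n}S_{\lambdab}^n|_E)^{-1}$ explicitly with respect to the orthogonal decomposition \eqref{ker-adjoint} of $E$.

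The crux is to show that $S_{\lambdab}^{*n}S_{\lambdab}^n$ acts as a single positive scalar on each summand of \eqref{ker-adjoint}. First I would observe that for every $w \in V$,
\[
S_{\lambdab}^{*n}S_{\lambdab}^n e_w = \|S_{\lambdab}^n e_w\|^2\, e_w.
\]
Indeed, $\langle S_{\lambdab}^{*n}S_{\lambdab}^n e_w, e_{w'}\rangle = \langle S_{\lambdab}^n e_w, S_{\lambdab}^n e_{w'}\rangle$, and since $S_{\lambdab}^n e_w$ is supported on the set of $n$-th descendants of $w$, this inner product vanishes unless $\dep w = \dep w'$; moreover, distinct vertices at a common depth have disjoint sets of $n$-th descendants, so only $w' = w$ contributes. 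By Lemma \ref{lemma-cor} the scalar $\|S_{\lambdab}^n e_w\|^2 = c_{\dep w}^2 \cdots c_{\dep w + n -1}^2$ depends only on $\dep w$. Hence $S_{\lambdab}^{*n}S_{\lambdab}^n$ multiplies $e_{\rootb}$ by $c_0^2 \cdots c_{n-1}^2$ (as $\dep \rootb = 0$) and multiplies every basis vector of $\ell^2(\mathsf{Chi}(v))$ by the common scalar $c_{\dep v+1}^2 \cdots c_{\dep v+n}^2$, so it leaves invariant and acts as this scalar on each piece $\ell^2(\mathsf{Chi}(v)) \ominus [\lambdab^v]$, $v \in V_\prec$. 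This is precisely the diagonal action already computed in the proof of Lemma \ref{power-ker-inva}.

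Since a positive invertible operator that is a positive scalar on each block of an orthogonal direct sum inverts blockwise, I obtain
\[
(S_{\lambdab}^{*n}S_{\lambdab}^n|_E)^{-1} = \frac{1}{c_0^2 \cdots c_{n-1}^2}\, P_{[e_{\mathsf{root}}]} + \sum_{v \in V_\prec} \frac{1}{c_{\dep v+1}^2 \cdots c_{\dep v+n}^2}\, P_{\ell^2(\mathsf{Chi}(v)) \ominus [\lambdab^v]}.
\]
Substituting this into the series for $\kappa$ and regrouping the terms — the $v$-indexed summands being mutually orthogonal, so that for fixed $z, w \in \Omega^\circ$ the interchange of the sums over $n$ and over $v$ is justified by the convergence of $\kappa(z,w)$ in $\mathcal B(E)$ — yields the asserted double-sum expression.

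The main obstacle is the diagonalization step: confirming that $S_{\lambdab}^{*n}S_{\lambdab}^n$ truly acts as one scalar on each summand, which hinges on the disjoint-support property of descendants on the tree combined with the depth-only dependence of $\|S_{\lambdab}^n e_w\|$ from Lemma \ref{lemma-cor}. Everything downstream — the blockwise inversion and the rearrangement of the $n$- and $v$-sums — is routine bookkeeping.
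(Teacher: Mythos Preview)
Your proposal is correct and follows essentially the same approach as the paper's proof: both reduce to the general kernel formula $\kappa(z,w)=\sum_n (S_{\lambdab}^{*n}S_{\lambdab}^n|_E)^{-1} z^n\bar w^n$, then compute $S_{\lambdab}^{*n}S_{\lambdab}^n$ blockwise on the decomposition \eqref{ker-adjoint} using Lemma~\ref{lemma-cor} (the paper simply cites the computation in the proof of Lemma~\ref{power-ker-inva}(iii) for this step), invert the scalars on each block, and substitute. Your explicit justification that $S_{\lambdab}^{*n}S_{\lambdab}^n e_w=\|S_{\lambdab}^n e_w\|^2 e_w$ via disjoint descendant sets is a slightly more detailed version of what the paper leaves implicit.
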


\begin{proof}
In view of the preceding discussion, we only need to show that $\kappa$ has the above mentioned form. Let $x \in E$. Then it follows from \eqref{ker-adjoint} that 
\beqn
x = P_{[e_{\mathsf{root}}]}(x) + \sum_{v \in V_\prec}P_{\ell^2(\mathsf{Chi}(v)) \ominus [\lambdab^v]}(x).
\eeqn
Hence, following the arguments of the proof of Lemma \ref{power-ker-inva}(iii) and using Lemma \ref{lemma-cor}, we get 
\beqn
S_{\lambdab}^{*n}S_{\lambdab}^n x = c_0^2 \cdots c_{n-1}^2 P_{[e_{\mathsf{root}}]}(x) + \sum_{v \in V_\prec} c_{\dep v+1}^2 \cdots c_{\dep v+n}^2 P_{\ell^2(\mathsf{Chi}(v)) \ominus [\lambdab^v]}(x)
\eeqn
for all $n \in \mathbb N$. This, in turn, yields that
\beqn
\kappa(z,w)x &=& \sum_{n=0}^\infty (S_{\lambdab}^{*n}S_{\lambdab}^n|_E)^{-1} x\, z^n \bar{w}^n = \sum_{n=0}^\infty \frac{z^n \overline{w}^n}{c_0^2 \cdots c_{n-1}^2} P_{[e_{\mathsf{root}}]}(x)\\ 
&+& \sum_{v \in V_\prec} \sum_{n=0}^\infty \frac{z^n \overline{w}^n}{c_{\dep v+1}^2 \cdots c_{\dep v+n}^2} P_{\ell^2(\mathsf{Chi}(v)) \ominus [\lambdab^v]}(x)
\eeqn
for all $z, w \in \Omega^\circ$, which completes the proof.
\end{proof}

The following lemma is a particular case of \cite[Corollary 2.5]{GKT2024} and is crucial for the classification of non-periodic balanced weighted shifts.

\begin{lemma}\label{lem-3.5}
Let $S_{\lambdab}$ and $\tilde S_{\lambdab}$ be balanced weighted shifts on rooted directed trees $\mathscr T = (V, \mathcal E)$ and $\tilde{\mathscr T} = (\tilde V, \tilde{\mathcal E})$, respectively with $E := \ker S_{\lambdab}^*$ and $\tilde E := \ker \tilde S_{\lambdab}^*$. Suppose that $S_{\lambdab}^{*n}S_{\lambdab}^n|_E$ and $\tilde S_{\lambdab}^{*n} \tilde S_{\lambdab}^n|_{\tilde E}$ are invertible operators on $E$ and $\tilde E$, respectively for all $n \in \mathbb N$. Then $S_{\lambdab}$ is unitarily equivalent to $\tilde S_{\lambdab}$ if and only if there exists a unitary $U: E \to \tilde E$ such that
\beq\label{unit-equiv}
U S_{\lambdab}^{*n} S_{\lambdab}^n|_E = \tilde S_{\lambdab}^{*n} \tilde S_{\lambdab}^n|_{\tilde E}\, U \ \text{ for all }\ n \in \mathbb N.
\eeq
\end{lemma}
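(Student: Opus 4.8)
The statement is a biconditional, and my plan is to reduce it to the concrete realization furnished by Proposition \ref{model-prop}, so that unitary equivalence of $S_{\lambdab}$ and $\tilde S_{\lambdab}$ becomes unitary equivalence of $\mathscr M_z$ on $\mathcal H^2_E(\mathscr B)$ and $\mathscr M_z$ on $\mathcal H^2_{\tilde E}(\tilde{\mathscr B})$, where $\mathscr B = \{(S_{\lambdab}^{*n}S_{\lambdab}^n|_E)^{1/2} : n \in \mathbb N\}$ and $\tilde{\mathscr B} = \{(\tilde S_{\lambdab}^{*n}\tilde S_{\lambdab}^n|_{\tilde E})^{1/2} : n \in \mathbb N\}$. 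Writing $B_n := (S_{\lambdab}^{*n}S_{\lambdab}^n|_E)^{1/2}$ and $\tilde B_n := (\tilde S_{\lambdab}^{*n}\tilde S_{\lambdab}^n|_{\tilde E})^{1/2}$, one has $B_n^2 = S_{\lambdab}^{*n}S_{\lambdab}^n|_E$ and $\tilde B_n^2 = \tilde S_{\lambdab}^{*n}\tilde S_{\lambdab}^n|_{\tilde E}$, so the asserted relation \eqref{unit-equiv} is precisely $U B_n^2 = \tilde B_n^2 U$ for all $n \in \mathbb N$.

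For the sufficiency direction, I would assume such a unitary $U : E \to \tilde E$ exists and define $W : \mathcal H^2_E(\mathscr B) \to \mathcal H^2_{\tilde E}(\tilde{\mathscr B})$ by $W\big(\sum_{n=0}^\infty x_n z^n\big) = \sum_{n=0}^\infty (U x_n) z^n$. The key computation is that $U^* \tilde B_n^2 U = B_n^2$, an immediate rearrangement of $U B_n^2 = \tilde B_n^2 U$, whence $\|\tilde B_n U x_n\|^2 = \langle \tilde B_n^2 U x_n, U x_n\rangle = \langle B_n^2 x_n, x_n\rangle = \|B_n x_n\|^2$ term by term. This simultaneously shows that $W$ is well defined, i.e.\ maps $\mathcal H^2_E(\mathscr B)$ into $\mathcal H^2_{\tilde E}(\tilde{\mathscr B})$, and that it is isometric; surjectivity is clear since $U$ is onto. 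As $W$ plainly commutes with multiplication by the coordinate function, $W \mathscr M_z = \mathscr M_z W$, and composing $W$ with the model unitaries of Proposition \ref{model-prop} produces a unitary equivalence of $S_{\lambdab}$ and $\tilde S_{\lambdab}$.

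For the necessity direction, I would take a unitary $\Phi : \ell^2(V) \to \ell^2(\tilde V)$ with $\Phi S_{\lambdab} = \tilde S_{\lambdab}\Phi$. Taking adjoints gives $\Phi S_{\lambdab}^* = \tilde S_{\lambdab}^*\Phi$, so $\Phi$ carries $\ker S_{\lambdab}^*$ onto $\ker \tilde S_{\lambdab}^*$ (surjectivity because $\Phi^*$ intertwines the adjoints in the reverse direction), and I set $U := \Phi|_E : E \to \tilde E$, a unitary. Iterating the two intertwining identities yields $\Phi S_{\lambdab}^n = \tilde S_{\lambdab}^n \Phi$ and $\Phi S_{\lambdab}^{*n} = \tilde S_{\lambdab}^{*n}\Phi$, hence $\Phi S_{\lambdab}^{*n}S_{\lambdab}^n = \tilde S_{\lambdab}^{*n}\tilde S_{\lambdab}^n\Phi$. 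The place where Lemma \ref{power-ker-inva} enters is the invariance $S_{\lambdab}^{*n}S_{\lambdab}^n(E) \subseteq E$ (and its tilde analogue): for $x \in E$ the vector $S_{\lambdab}^{*n}S_{\lambdab}^n x$ lies in $E$, so $\Phi$ acts on it as $U$, while $\tilde S_{\lambdab}^{*n}\tilde S_{\lambdab}^n \Phi x$ lies in $\tilde E$; restricting the identity to $E$ then reads off as $U S_{\lambdab}^{*n}S_{\lambdab}^n|_E = \tilde S_{\lambdab}^{*n}\tilde S_{\lambdab}^n|_{\tilde E}\,U$, which is \eqref{unit-equiv}.

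The computations are all elementary; the only genuine care required is bookkeeping about restrictions. Specifically, one must be sure that the operator weights $S_{\lambdab}^{*n}S_{\lambdab}^n|_E$ are honestly operators on $E$, which is exactly the invariance statement of Lemma \ref{power-ker-inva}, and that $\Phi$ agrees with $U$ on the relevant vectors so that the intertwining identity can be read off on $E$ rather than on all of $\ell^2(V)$. I expect this identification of the restricted weights to be the main, albeit minor, obstacle; everything else follows from the model of Proposition \ref{model-prop} and routine Hilbert space manipulations. Alternatively, once the model is in place the entire statement can be quoted directly from \cite[Corollary 2.5]{GKT2024}.
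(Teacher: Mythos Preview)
Your proposal is correct. The paper itself does not supply a proof of Lemma \ref{lem-3.5}; it simply records the statement as a particular case of \cite[Corollary 2.5]{GKT2024}, exactly the alternative you note at the end. Your direct argument via Proposition \ref{model-prop} is precisely the specialization of that corollary to the present setting: once $S_{\lambdab}$ and $\tilde S_{\lambdab}$ are modelled as $\mathscr M_z$ on $\mathcal H^2_E(\mathscr B)$ and $\mathcal H^2_{\tilde E}(\tilde{\mathscr B})$, the biconditional reduces to the standard criterion for unitary equivalence of operator-valued unilateral shifts with invertible weights, and your coefficientwise unitary $W$ together with the restriction $U=\Phi|_E$ handle the two directions. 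The one subtlety you identify---that $S_{\lambdab}^{*n}S_{\lambdab}^n(E)\subseteq E$ is needed so that $\Phi$ may be replaced by $U$ on the left---is indeed established in the first paragraph of the proof of Lemma \ref{power-ker-inva}, so your bookkeeping is in order.
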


The next lemma is an intermediate step of the proof of Theorem \ref{Thm1.2} for which we need the sequence $(W_n)_{n \in \mathbb N}$ (resp. $(\tilde W_n)_{n \in \mathbb N}$) of subspaces of $\ker S_{\lambdab}^*$ (resp. $\ker \tilde S_{\lambdab}^*$) defined as follows:
\beq\label{W-n}
\begin{split}
W_0 &:= [e_{\mathsf{root}}]\ \text{ and }\ W_n := \bigoplus_{v \in 
V_\prec \cap \mathscr G_{n-1}}\Big(\ell^2(\mathsf{Chi}(v)) \ominus [\lambdab^v]\Big),\ n \geqslant 1,\\
\tilde W_0 &:= [e_{\tilde{\mathsf{root}}}]\ \text{ and }\ \tilde W_n := \bigoplus_{\tilde v \in 
\tilde V_\prec \cap \tilde{\mathscr G}_{n-1}}\Big(\ell^2(\mathsf{Chi}(\tilde v)) \ominus [\lambdab^{\tilde v}]\Big),\ n \geqslant 1.
\end{split}
\eeq
Then it follows from \eqref{ker-adjoint-gen} that 
\beq\label{ker-W-n}
\ker S_{\lambdab}^* = \bigoplus_{n \in \mathbb N} W_n\ \text{ and }\ \ker \tilde S_{\lambdab}^* = \bigoplus_{n \in \mathbb N} \tilde W_n.
\eeq

\begin{lemma}\label{lem-3.6}
Let $S_{\lambdab}$ and $\tilde S_{\lambdab}$ be balanced weighted shifts on rooted directed trees $\mathscr T = (V, \mathcal E)$ and $\tilde{\mathscr T} = (\tilde V, \tilde{\mathcal E})$, respectively with $E := \ker S_{\lambdab}^*$ and $\tilde E := \ker \tilde S_{\lambdab}^*$. Suppose that $S_{\lambdab}^{*n}S_{\lambdab}^n|_E$ and $\tilde S_{\lambdab}^{*n} \tilde S_{\lambdab}^n|_{\tilde E}$ are invertible operators on $E$ and $\tilde E$, respectively for all $n \in \mathbb N$. Let $U: E \to \tilde E$ be a unitary satisfying \eqref{unit-equiv}. If $S_{\lambdab}$ and $\tilde S_{\lambdab}$ are non-periodic, then for each $k \in \mathbb N$, $U(W_k) = \tilde W_m$ for a unique $m \in \mathbb N$, where $W_n$ and $\tilde W_n$ are given by \eqref{W-n}.
\end{lemma}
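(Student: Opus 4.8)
The plan is to diagonalize the family $\big(S_{\lambdab}^{*n}S_{\lambdab}^n|_E\big)_{n\in\mathbb N}$ simultaneously against the decomposition \eqref{ker-W-n}, and then read off the correspondence directly from the intertwining relation \eqref{unit-equiv}. First I would record, exactly as in the computations carried out in the proofs of Lemma \ref{power-ker-inva}(iii) and Proposition \ref{analytic-model} (both resting on Lemma \ref{lemma-cor}), that for every $k,n\in\mathbb N$ the operator $S_{\lambdab}^{*n}S_{\lambdab}^n|_E$ acts on the summand $W_k$ as the scalar
\[
\beta_n^{(k)} := c_k^2\,c_{k+1}^2\cdots c_{k+n-1}^2 \qquad (\beta_0^{(k)}:=1).
\]
Indeed, on $W_0=[e_{\mathsf{root}}]$ this scalar is $c_0^2\cdots c_{n-1}^2$, while on the summand attached to a branching vertex $v\in V_\prec\cap\mathscr G_{k-1}$ it equals $c_{\dep v+1}^2\cdots c_{\dep v+n}^2=c_k^2\cdots c_{k+n-1}^2$. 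Thus each $W_k$ is a joint eigenspace of the whole family with eigenvalue sequence $(\beta_n^{(k)})_{n\in\mathbb N}$, and the analogous statement holds on the other side for $\tilde W_m$ with the sequence $(\tilde\beta_n^{(m)})_n$, where $\tilde\beta_n^{(m)}=\tilde c_m^2\cdots\tilde c_{m+n-1}^2$.

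The decisive step, and the only place where the hypothesis is used, is to see that non-periodicity separates these eigenvalue sequences. I would observe that $\beta_n^{(k)}=\beta_n^{(k')}$ for all $n$ forces $c_{k+j}=c_{k'+j}$ for every $j\geq 0$, since from the sequence $(\beta_n^{(k)})_n$ one recovers each ratio $\beta_{n+1}^{(k)}/\beta_n^{(k)}=c_{k+n}^2$. If $k\neq k'$, say $k<k'$, this gives $c_m=c_{m+(k'-k)}$ for all $m\geq k$, i.e. $(c_n)$ is eventually periodic, contradicting the non-periodicity of $S_{\lambdab}$. Hence $k\mapsto(\beta_n^{(k)})_n$ is injective, so $W_k$ is \emph{precisely} the joint eigenspace of $\big(S_{\lambdab}^{*n}S_{\lambdab}^n|_E\big)_n$ for the sequence $(\beta_n^{(k)})_n$; by the non-periodicity of $\tilde S_{\lambdab}$ the same holds for the $\tilde W_m$ and the sequences $(\tilde\beta_n^{(m)})_n$.

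Now I would exploit \eqref{unit-equiv}. For $x\in W_k$ the intertwining gives
\[
\tilde S_{\lambdab}^{*n}\tilde S_{\lambdab}^n\,(Ux)=U\,S_{\lambdab}^{*n}S_{\lambdab}^n\,x=\beta_n^{(k)}\,Ux \qquad (n\in\mathbb N),
\]
so $Ux$ is a joint eigenvector of $\big(\tilde S_{\lambdab}^{*n}\tilde S_{\lambdab}^n|_{\tilde E}\big)_n$ with eigenvalue sequence $(\beta_n^{(k)})_n$. By the separation just established on the $\tilde E$-side, the joint eigenspace for a fixed sequence is either $\{0\}$ or a single summand $\tilde W_m$; consequently $U(W_k)\subseteq\tilde W_m$ for a unique $m$, the existence of $m$ being forced when $W_k\neq\{0\}$ because $U$ is injective and hence $U(W_k)\neq\{0\}$.

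Finally, for the reverse inclusion I would run the same argument for $U^*=U^{-1}$: taking adjoints in \eqref{unit-equiv} and using that $U$ is unitary yields $S_{\lambdab}^{*n}S_{\lambdab}^n\,U^*=U^*\,\tilde S_{\lambdab}^{*n}\tilde S_{\lambdab}^n$, so $U^*$ carries joint eigenvectors of the $\tilde E$-family to those of the $E$-family and therefore maps $\tilde W_m$ into some $W_{k'}$. From $U(W_k)\subseteq\tilde W_m$ we get $W_k\subseteq U^*(\tilde W_m)\subseteq W_{k'}$, and mutual orthogonality of the summands $W_j$ forces $k'=k$; applying $U$ then gives $\tilde W_m=U\,U^*(\tilde W_m)\subseteq U(W_k)\subseteq\tilde W_m$, i.e. $U(W_k)=\tilde W_m$. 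I expect the separation step to be the main obstacle (and the heart of the matter): it is exactly the non-periodicity that prevents two distinct summands $W_k$ from sharing the same joint spectrum and thereby being mixed by $U$, and without it the conclusion genuinely fails, as the eventually periodic counterexample announced in the abstract illustrates.
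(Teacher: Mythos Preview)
Your proof is correct and follows essentially the same approach as the paper: both arguments rest on the fact that $S_{\lambdab}^{*n}S_{\lambdab}^n|_E$ acts on $W_k$ as the scalar $c_k^2\cdots c_{k+n-1}^2$ (via Lemma~\ref{lemma-cor}), that non-periodicity makes these scalar sequences pairwise distinct, and that the intertwining relation \eqref{unit-equiv} then forces $U(W_k)$ into a single summand $\tilde W_m$, with the reverse inclusion obtained by symmetry through $U^*$. Your packaging in terms of joint eigenspaces is a slightly cleaner rephrasing of the paper's component-by-component computation, but the substance is identical.
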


\begin{proof}
Suppose that $S_{\lambdab}$ and $\tilde S_{\lambdab}$ are non-periodic. Fix $k \in \mathbb N$ and let $f \in W_k$. Since $Uf \in \tilde E$, let 
\[Uf = \sum_{m \in \mathbb N} g_m, \quad g_m \in \tilde W_m.\]
Let $c_n$ $($resp. $\tilde c_n)$ denote the constant value of $\|S_{\lambdab} e_v\|$ $($resp. $\|\tilde S_{\lambdab} e_{\tilde v}\|)$ for $v \in \mathscr G_n\, ($resp. $\tilde v \in \tilde{\mathscr G}_n)$. Suppose that $g_{m_1}$ and $g_{m_2}$ are non-zero for some distinct $m_1, m_2 \in \mathbb N$. Then by \eqref{unit-equiv} together with Lemma \ref{lemma-cor}, we get
\beqn
c_k^2 \cdots c_{k+n-1}^2 \Big(\sum_{m \in \mathbb N} g_m\Big) &=& c_k^2 \cdots c_{k+n-1}^2 U f = U S_{\lambdab}^{*n} S_{\lambdab}^n f
= \tilde S_{\lambdab}^{*n} \tilde S_{\lambdab}^n U f\\ 
&=& \sum_{m \in \mathbb N} \tilde c_m^2 \cdots \tilde c_{m+n-1}^2 g_m,
\eeqn
for all $n \in \mathbb N$. Taking the inner product on both sides with $g_{m_1}$ and then by $g_{m_2}$, we get
\[c_{k+j} = \tilde c_{m_1+j} = \tilde c_{m_2+j}\ \text{ for all }\ j \in \mathbb N.\]
Assume without loss of generality that $m_1 < m_2$. Then the above becomes
\[\tilde c_{m_1+j} = \tilde c_{(m_2-m_1)+m_1+j}\ \text{ for all }\ j \in \mathbb N.\]
This contradicts the fact that $\tilde S_{\lambdab}$ is non-periodic. Hence, there exists a unique $m \in \mathbb N$ such that $Uf = g_m$. This shows that $U(W_k) \subseteq \tilde W_m$. Now suppose that there exists $g \in \tilde W_m \setminus U(W_k)$. Then there exist $k_1 \in \mathbb N$ with $k_1 \ne k$ and $f_1 \in W_{k_1}$ such that $Uf_1 = g$. Again, applying \eqref{unit-equiv} and using Lemma \ref{lemma-cor}, we obtain
\beqn
c_{k_1}^2 \cdots c_{k_1+n-1}^2 g &=& c_{k_1}^2 \cdots c_{k_1+n-1}^2 U f_1 = U S_{\lambdab}^{*n} S_{\lambdab}^n f_1
= \tilde S_{\lambdab}^{*n} \tilde S_{\lambdab}^n U f_1\\ 
&=& \tilde c_m^2 \cdots \tilde c_{m+n-1}^2 g,
\eeqn
for all $n \in \mathbb N$. Comparing the coefficients on both sides, we get
$c_{k_1+j} = \tilde c_{m+j}\ \text{ for all }\ j \in \mathbb N$. Similar arguments for $Uf = g_m$ yields that $c_{k+j} = \tilde c_{m+j}\ \text{ for all }\ j \in \mathbb N$. Combining both gives that 
\[c_{k_1+j} = c_{k+j}\ \text{ for all }\ j \in \mathbb N.\]
This contradicts the fact that $S_{\lambdab}$ is non-periodic. Thus, $U(W_k) = \tilde W_m$ for a unique $m \in \mathbb N$, completing the proof.
\end{proof}

We now prove the main result of this article, which is Theorem \ref{Thm1.2}. It is restated here for the sake of convenience.

\begin{theorem}\label{Thm3.5}
Let $S_{\lambdab}$ and $\tilde S_{\lambdab}$ be balanced weighted shifts on rooted directed trees $\mathscr T = (V, \mathcal E)$ and $\tilde{\mathscr T} = (\tilde V, \tilde{\mathcal E})$, respectively with $E := \ker S_{\lambdab}^*$ and $\tilde E := \ker \tilde S_{\lambdab}^*$. Suppose that $S_{\lambdab}^{*n}S_{\lambdab}^n|_E$ and $\tilde S_{\lambdab}^{*n} \tilde S_{\lambdab}^n|_{\tilde E}$ are invertible operators on $E$ and $\tilde E$, respectively for all $n \in \mathbb N$. If $S_{\lambdab}$ and $\tilde S_{\lambdab}$ are non-periodic, then $S_{\lambdab}$ is unitarily equivalent to $\tilde S_{\lambdab}$ if and only if
\[c_n = \tilde c_n \text{ and Card}(\mathscr G_n) = \text{Card}(\tilde{\mathscr G}_n)\ \text{ for all }\ n \in \mathbb N,\]
where $c_n$ $($resp. $\tilde c_n)$ denotes the constant value of $\|S_{\lambdab} e_v\|$ $($resp. $\|\tilde S_{\lambdab} e_{\tilde v}\|)$ for $v \in \mathscr G_n\, ($resp. $\tilde v \in \tilde{\mathscr G}_n)$.
\end{theorem}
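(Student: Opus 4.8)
The plan is to invoke Lemma~\ref{lem-3.5}, which reduces unitary equivalence of $S_{\lambdab}$ and $\tilde S_{\lambdab}$ to the existence of a unitary $U : E \to \tilde E$ intertwining the two sequences of operators as in \eqref{unit-equiv}, and then to exploit the orthogonal decompositions \eqref{ker-W-n}. I would first record two structural facts. The first is that $S_{\lambdab}^{*k}S_{\lambdab}^k$ acts as a scalar on each summand $W_n$: for $v \in V_\prec \cap \mathscr G_{n-1}$ and $w \in \mathsf{Chi}(v)$ one has $\dep w = n$, so Lemma~\ref{lemma-cor} gives $S_{\lambdab}^{*k}S_{\lambdab}^k|_{W_n} = (c_n \cdots c_{n+k-1})^2 I_{W_n}$, the case $W_0 = [e_{\mathsf{root}}]$ being covered by $v = \mathsf{root}$ (and likewise for $\tilde S_{\lambdab}$). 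The second is a purely combinatorial identity linking $\dim W_n$ to generation sizes: since a leafless non-branching vertex has exactly one child, counting children of $\mathscr G_{n-1}$ gives the recursion $\text{Card}(\mathscr G_n) = \dim W_n + \text{Card}(\mathscr G_{n-1})$ for $n \geqslant 1$, with $\dim W_0 = 1 = \text{Card}(\mathscr G_0)$, whence $\text{Card}(\mathscr G_n) = \sum_{k=0}^n \dim W_k$. Thus matching the numbers $\text{Card}(\mathscr G_n)$ for all $n$ is equivalent to matching the dimensions $\dim W_n$ for all $n$, while the scalar $(c_n\cdots c_{n+k-1})^2$ encodes the sequence $(c_n)$.

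For the sufficiency direction I would assume $c_n = \tilde c_n$ and $\text{Card}(\mathscr G_n) = \text{Card}(\tilde{\mathscr G}_n)$ for all $n$. The combinatorial identity yields $\dim W_n = \dim \tilde W_n$ for every $n$, so one may pick, for each $n$, a unitary $W_n \to \tilde W_n$ and assemble these into a single unitary $U : E \to \tilde E$ via \eqref{ker-W-n}. Since $S_{\lambdab}^{*k}S_{\lambdab}^k$ and $\tilde S_{\lambdab}^{*k}\tilde S_{\lambdab}^k$ act on $W_n$ and $\tilde W_n$ as the scalars $(c_n\cdots c_{n+k-1})^2$ and $(\tilde c_n \cdots \tilde c_{n+k-1})^2$, and these agree because $c_n = \tilde c_n$, the relation \eqref{unit-equiv} holds on each $W_n$ and hence on all of $E$; Lemma~\ref{lem-3.5} then delivers the unitary equivalence.

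For the necessity direction I would start from a unitary $U : E \to \tilde E$ satisfying \eqref{unit-equiv}, supplied by Lemma~\ref{lem-3.5}. Lemma~\ref{lem-3.6} produces, for each $k$, a unique $\sigma(k)$ with $U(W_k) = \tilde W_{\sigma(k)}$; as $U$ is unitary and the families $\{W_k\}$, $\{\tilde W_m\}$ exhaust $E$, $\tilde E$ orthogonally, the map $\sigma$ is a bijection of $\mathbb N$ with $\dim W_k = \dim \tilde W_{\sigma(k)}$. Applying \eqref{unit-equiv} to a nonzero $f \in W_k$ and comparing the scalar actions on $W_k$ and $\tilde W_{\sigma(k)}$ gives $(c_k\cdots c_{k+n-1})^2 = (\tilde c_{\sigma(k)}\cdots \tilde c_{\sigma(k)+n-1})^2$ for all $n$, hence, by taking successive ratios, $c_{k+j} = \tilde c_{\sigma(k)+j}$ for all $j \in \mathbb N$. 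The heart of the argument is to force $\sigma = \mathrm{id}$. Taking $k = 0$ and $k = \sigma^{-1}(0)$ and combining the two resulting identities yields $c_j = c_{(\sigma^{-1}(0)+\sigma(0))+j}$ for all $j$; if $\sigma^{-1}(0)+\sigma(0) \geqslant 1$ this makes $(c_n)$ periodic, contradicting non-periodicity, so $\sigma(0) = 0$ and therefore $\tilde c_j = c_j$ for all $j$. Feeding this back, for arbitrary $k$ the identity reads $c_{k+j} = c_{\sigma(k)+j}$ for all $j$; if $\sigma(k) \neq k$ then with $d = |\sigma(k)-k| \geqslant 1$ one obtains $c_{i+d} = c_i$ for all $i \geqslant \min(k,\sigma(k))$, i.e. $(c_n)$ is eventually periodic, again a contradiction. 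Hence $\sigma = \mathrm{id}$, giving $c_k = \tilde c_k$ and $\dim W_k = \dim \tilde W_k$ for all $k$, and the combinatorial identity $\text{Card}(\mathscr G_n) = \sum_{k=0}^n \dim W_k$ upgrades the latter to $\text{Card}(\mathscr G_n) = \text{Card}(\tilde{\mathscr G}_n)$.

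The main obstacle is precisely the step $\sigma = \mathrm{id}$: a priori the intertwining unitary need only permute the summands $W_k$ among themselves, and the shift-invariance relations $c_{k+j} = \tilde c_{\sigma(k)+j}$ are symmetric enough to be satisfied by nontrivial permutations in the periodic case. It is exactly the hypothesis that $(c_n)$ is non-periodic that converts these relations into the rigidity $\sigma = \mathrm{id}$, thereby tying together the two numerical invariants. The remaining ingredients, namely the scalar action of $S_{\lambdab}^{*k}S_{\lambdab}^k$ on $W_n$ and the generation count $\text{Card}(\mathscr G_n) = \sum_{k\leqslant n}\dim W_k$, are routine and independent of periodicity.
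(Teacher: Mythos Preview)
Your proposal is correct and follows essentially the same route as the paper: both directions rest on Lemma~\ref{lem-3.5}, the decomposition \eqref{ker-W-n}, the scalar action $S_{\lambdab}^{*k}S_{\lambdab}^k|_{W_n} = (c_n\cdots c_{n+k-1})^2 I$, and Lemma~\ref{lem-3.6}. The only difference is organizational: in the necessity direction the paper runs a formal induction on $k$ to show $U(W_k)=\tilde W_k$, whereas you first pin down $\sigma(0)=0$ (hence $c_j=\tilde c_j$) and then dispatch every remaining $k$ in one stroke via eventual periodicity of $(c_n)$; the underlying contradiction is identical.
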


\begin{proof}
Let $S_{\lambdab}$ and $\tilde S_{\lambdab}$ be non-periodic. Suppose that $S_{\lambdab}$ is unitarily equivalent to $\tilde S_{\lambdab}$ and  $U : E \to \tilde E$ is a unitary satisfying \eqref{unit-equiv}. We claim that $U(W_k) = \tilde W_k$ for all $k \in \mathbb N$, where $W_k$ and $\tilde W_k$ are given by \eqref{W-n}. We prove the claim by mathematical induction. To this end, note that by Lemma \ref{lem-3.6}, $U(W_0) = \tilde W_m$ for some $m \in \mathbb N$. On contrary, assume that $m \geqslant 1$. Then $U e_{\rootb} = g$ for some $g \in \tilde W_m$. Applying \eqref{unit-equiv} together with Lemma \ref{lemma-cor}, we have
\beqn
c_0^2\cdots c_{n-1}^2 g &=& c_0^2\cdots c_{n-1}^2 U e_{\rootb} = U S_{\lambdab}^{*n} S_{\lambdab}^n e_{\mathsf{root}} = \tilde S_{\lambdab}^{*n} \tilde S_{\lambdab}^n U e_{\mathsf{root}}\\ 
&=& \tilde c_m^2 \cdots \tilde c_{m+n-1}^2 g
\eeqn
for all $n \in \mathbb N$. Comparing the coefficients on both sides, we get
\beq\label{01}
c_j = \tilde c_{m+j}\ \text{ for all }\ j \in \mathbb N. 
\eeq
Also by Lemma \ref{lem-3.6}, there exists $k \in \mathbb N$ such that $U(W_k) = \tilde W_0$. Note that $k \geqslant 1$. Then $Uf = e_{\tilde{\mathsf{root}}}$ for some $f \in W_k$. Again applying \eqref{unit-equiv} together with Lemma \ref{lemma-cor}, we get
\beqn
c_k^2\cdots c_{k+n-1}^2 e_{\tilde{\mathsf{root}}} &=& c_k^2\cdots c_{k+n-1}^2 Uf  = U S_{\lambdab}^{*n} S_{\lambdab}^n f = \tilde S_{\lambdab}^{*n} \tilde S_{\lambdab}^n U f\\ 
&=& \tilde c_0^2 \cdots \tilde c_{n-1}^2 e_{\tilde{\mathsf{root}}}
\eeqn
for all $n \in \mathbb N$. Comparing the coefficients on both sides, we get
\beq\label{02}
c_{k+j} = \tilde c_{j}\ \text{ for all }\ j \in \mathbb N.
\eeq
Combining \eqref{01} and \eqref{02}, we obtain that
\beqn
c_j = \tilde c_{m+j} = c_{m+k+j}\ \text{ for all }\ j \in \mathbb N.
\eeqn
Since $m+k \geqslant 2 > 0$, it follows that $S_{\lambdab}$ is periodic. This is a contradiction. Hence, $U(W_0) = \tilde W_0$, and consequently, by \eqref{01}, we get
\beqn
c_j = \tilde c_j\ \text{ for all }\ j \in \mathbb N.
\eeqn
Suppose that for some $k \in \mathbb N$, $U(W_l) = \tilde W_l$ for all $l \in \{0, \ldots, k\}$. On contrary, assume that $U(W_{k+1}) = \tilde W_m$ for some $m \geqslant k+2$. Then there exists $p \in \mathbb N$ such that $U(W_p) = \tilde W_{k+1}$. Note that $p \geqslant k+2$. Similar arguments as above show that
\beqn
c_{k+1+j} = \tilde c_{m+j} \ \text{ and }\ c_{p+j} = \tilde c_{k+1+j}\ \text{ for all }\ j \in \mathbb N.
\eeqn
This, in turn, gives that
\beqn
c_{k+1+j} = \tilde c_{m+j} = \tilde c_{(m-k-1)+k+1+j} = c_{(m-k-1)+p+j}\ \text{ for all }\ j \in \mathbb N.
\eeqn
Observe that $(m-k-1)+p \geqslant k+3 > k+1$. Thus, $S_{\lambdab}$ is eventually periodic, which is a contradiction. Hence, $U(W_{k+1}) = \tilde W_{k+1}$ and this completes the proof of the claim.

Since $U$ is a unitary, the identity $U(W_k) = \tilde W_k$ for all $k \in \mathbb N$, yields that 
\[\sum_{v \in V_\prec \cap \mathscr G_n}\Big(\text{Card}\big(\child v\big)-1\Big) = \sum_{\tilde v \in 
\tilde V_\prec \cap \tilde{\mathscr G}_n}\Big(\text{Card}\big(\child {\tilde v}\big)-1\Big),\ n \in \mathbb N.\]
Observe that if $v \in \mathscr G_n$ but $v \notin V_\prec$, then $\text{Card}\big(\child v\big)-1 = 0$. Thus, above becomes
\[\sum_{v \in \mathscr G_n}\Big(\text{Card}\big(\child v\big)-1\Big) = \sum_{\tilde v \in \tilde{\mathscr G}_n}\Big(\text{Card}\big(\child{\tilde v}\big)-1\Big),\ n \in \mathbb N.\]
This, in turn, gives that
\[\text{Card}(\mathscr G_{n+1}) - \text{Card}(\mathscr G_n) = \text{Card}(\tilde{\mathscr G}_{n+1}) - \text{Card}(\tilde{\mathscr G}_n),\ n \in \mathbb N.\]
Since $\text{Card}(\mathscr G_0) = \text{Card}(\tilde{\mathscr G}_0) = 1$, we obtain that
\[\text{Card}(\mathscr G_n) = \text{Card}(\tilde{\mathscr G}_n)\ \text{ for all }\ n \in \mathbb N.\]

Conversely, assume that $c_n = \tilde c_n$ and $\text{Card}(\mathscr G_n) = \text{Card}(\tilde{\mathscr G}_n) \text{ for all } n \in \mathbb N.$ 
Then $\dim W_n = \dim \tilde W_n$ for all $n \in \mathbb N$. Consider the linear map $U : E \to \tilde E$ given by $U = \oplus_{n \in \mathbb N}\, U_n$, where $U_n$ is a unitary from $W_n$ onto $\tilde W_n$. In view of \eqref{ker-W-n}, it is easy to see that $U$ is a unitary from $E$ onto $\tilde E$. Now we show that $U$ satisfies \eqref{unit-equiv}. To this end, let $f \in E$. Then by \eqref{ker-W-n}, we get
\[f = \sum_{m\in \mathbb N} f_m,\quad f_m \in W_m. \]
Using Lemma \ref{lemma-cor}, we get for all integer $n \geqslant 1$,
\beqn
U S_{\lambdab}^{*n} S_{\lambdab}^n f &=& \sum_{m\in \mathbb N} U S_{\lambdab}^{*n} S_{\lambdab}^n f_m = \sum_{m\in \mathbb N} c_{m}^2 \cdots c_{m+n-1}^2 U f_m\\ 
&=& \sum_{m\in \mathbb N} \tilde c_{m}^2 \cdots \tilde c_{m+n-1}^2 U f_m = \sum_{m\in \mathbb N} \tilde S_{\lambdab}^{*n} \tilde S_{\lambdab}^n U f_m = \tilde S_{\lambdab}^{*n} \tilde S_{\lambdab}^n Uf.
\eeqn
The second last inequality in above follows from the fact that $Uf_m \in \tilde W_m$ for all $m \in \mathbb N$. Therefore, by Lemma \ref{lem-3.5}, $S_{\lambdab}$ is unitarily equivalent to $\tilde S_{\lambdab}$. This completes the proof.
\end{proof}

\section{Concluding remarks}

The case of eventually periodic balanced weighted shifts warrants further attention. Note that the converse part of the proof of Theorem \ref{Thm3.5} does not assume that the weighted shifts are non-periodic. Hence, the criteria provided in Theorem \ref{Thm3.5} is sufficient but not necessary for the unitary equivalence of two eventually periodic balanced weighted shifts. For example, consider the simplest case of isometric weighted shifts $S_{\lambdab}$ and $\tilde S_{\lambdab}$ on the directed trees $\mathscr T$ and $\tilde{\mathscr T}$, respectively shown below: 
\begin{figure}[H]
\includegraphics[scale=.5]{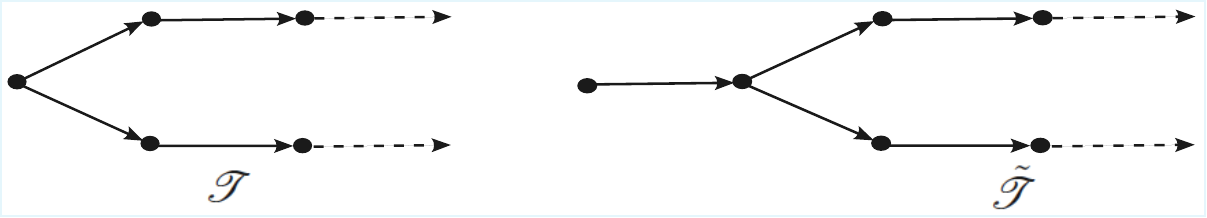}
\end{figure}
\noindent Then $c_n = \tilde c_n = 1$ for all $n \in \mathbb N$. Observe that $\dim \ker S_{\lambdab}^* = \dim \ker \tilde S_{\lambdab}^* = 2$. Hence by Wold decomposition, both $S_{\lambdab}$ and $\tilde S_{\lambdab}$ are unitarily equivalent. But $\text{Card}(\mathscr G_1) \ne \text{Card}(\tilde{\mathscr G}_1)$. Moreover, a bigger question about the unitary equivalence of two arbitrary weighted shifts on rooted directed trees remains open till date. 

%\medskip \textit{Acknowledgment}. 

\end{document}